\documentclass[10pt,a4paper]{amsart}

\usepackage{amssymb}
\usepackage[utf8]{inputenc}
\usepackage[english]{babel}
\usepackage{amsfonts}
\usepackage{amsmath}
\usepackage{comment}
\DeclareFontFamily{U}{mathx}{\hyphenchar\font45}
\DeclareFontShape{U}{mathx}{m}{n}{
      <5> <6> <7> <8> <9> <10>
      <10.95> <12> <14.4> <17.28> <20.74> <24.88>
      mathx10
      }{}
\DeclareSymbolFont{mathx}{U}{mathx}{m}{n}
\DeclareFontSubstitution{U}{mathx}{m}{n}
\DeclareMathAccent{\widecheck}{0}{mathx}{"71}
\DeclareMathAccent{\wideparen}{0}{mathx}{"75}

\usepackage{amsthm}
\usepackage{graphicx}
\usepackage{xcolor}
\usepackage{color}
\usepackage{enumitem}
\usepackage[bookmarksopen,bookmarksdepth=3,colorlinks,citecolor=red,pagebackref,hypertexnames=false]{hyperref}
\usepackage[nameinlink]{cleveref}

\usepackage{dsfont}

\newtheorem{main-theorem}{Theorem}
\newtheorem{proposition}{Proposition}[section]
\newtheorem{theorem}[proposition]{Theorem}

\newtheorem{corollary}[proposition]{Corollary}
\newtheorem{lemma}[proposition]{Lemma}
\theoremstyle{remark}
\newtheorem{remark}[proposition]{Remark}

\theoremstyle{definition}

\newtheorem*{acknowledgements}{Acknowledgements}

\DeclareMathOperator{\supp}{supp}

\newcommand{\R}{\mathbb{R}}
\newcommand{\C}{\mathbb{C}}

\newcommand{\N}{\mathbb{N}}

\newcommand{\dd}{\mathrm{d}}
\newcommand{\tm}{\mathrm{t}}
\newcommand{\x}{\mathrm{x}}
\newcommand{\Id}{\mathrm{Id}}
\newcommand{\Orth}{\mathrm{O}}

\newcommand{\marnote}[1]{\marginpar{\footnotesize #1}} 

\title[Initial-to-final for the heat operator]{The initial-to-final inverse problem\\
for the heat operator}

\author[Alberola]{Enric Alberola}
\address[Enric Alberola]{Basque Center for Applied Mathematics, Bilbao, Spain}
\email{\href{mailto:ealberola@bcamath.org}{\textrm{ealberola@bcamath.org}}}

\author[Aroua]{Nesrine Aroua}
\address[Nesrine Aroua]{MaLGa Center, Department of Mathematics, University of Genoa, Via Dodecaneso 35, 16146 Genova, Italy.
}
\email{\href{mailto:nesrine.aroua@edu.unige.it}{\textrm{nesrine.aroua@edu.unige.it}}}

\author[Caro]{Pedro Caro}
\address[Pedro Caro]{
Basque Center for Applied Mathematics and Ikerbasque (Basque Foundation for Science) Bilbao, Spain}
\email{\href{mailto:pcaro@bcamath.org}{\textrm{pcaro@bcamath.org}}}

\begin{document}

\begin{abstract}
We study an inverse problem for the heat equation in a medium that generates internal heat at a 
rate proportional to the heat density. The goal consists of determining the
heat-generation coefficient 
$V$ from the knowledge of the initial-to-final map, which assigns to every initial heat density its 
final heat density. We state and prove a uniqueness result for the heat operator in
a region modelled by $\R^n$ with $n \geq 2$.
We assume that $V$ is bounded and decays super-exponentially.
Our approach relies on constructing exponentially-growing solutions for
the corresponding heat operator. A key contribution of our approach is to provide a 
weighted $L^2$-estimate, which follows from the moment generating function of a Gaussian random 
variable. This extends the initial-to-final-state inverse problem, previously studied for the 
Schrödinger equation, to the parabolic setting.
\end{abstract}

\date{\today}


\maketitle

\section{Introduction}\label{sec: intro}
Let $\R^n$ with $n \in \N$ represent a region where thermal conduction takes place.
If we assume that this region generates its own heat at a rate given by 
$V(t, x) u(t, x)$, at each time $t \in (0, T)$ and at every point $x \in \R^n$,
then the heat density 
$u : (t, x) \in [0,T] \times \R^n \mapsto u(t, x) $,
initially prescribed by $f : x \in \R^n \mapsto f(x)$, satisfies the following initial-value problem
\begin{equation}
	\left\{
		\begin{aligned}
		& \partial_\tm u - \Delta u = V u & & \textnormal{in} \enspace (0,T) \times \R^n, \\
		& u(0, \centerdot) = f &  & \textnormal{in} \enspace \R^n.
		\end{aligned}
	\right.
	\label{pb:IVP}
\end{equation}
Given a \textit{heat-generation} coefficient $V \in L^1((0,T); L^\infty(\R^n))$
and an initial condition $f \in L^2(\R^n)$,
there exists a unique solution
$u \in C([0,T]; L^2(\R^n))$.
Additionally, there exists a constant $C$ that depends on $V$ and $T$ such that
\[ \sup_{t \in [0,T]} \| u(t, \centerdot) \|_{L^2 (\R^n)} \leq C \| f \|_{L^2(\R^n)}. \]
This allows us to define a bounded linear map given by
\[ \mathcal{U}_T : f \in L^2(\R^n) \longmapsto u(T, \centerdot) \in L^2(\R^n), \]
that we call the initial-to-final map.

We seek conditions on $V$ that ensure its determination from $\mathcal{U}_T$. This problem was 
first formulated by Caro and Ruiz \cite{zbMATH07801151} in the context of the Schrödinger equation
\[ i \partial_\tm u = -\Delta u + V u \]
as a theoretical framework for data-driven prediction in quantum mechanics.
The motivation stems from the following data-driven paradigm: if one knows the final state of a 
quantum system for every possible initial state, can one determine the underlying Hamiltonian 
governing the evolution? In the exact formulation, this amounts to recovering the potential $V$ 
from the initial-to-final-state map $\mathcal{U}_T$.

For the Schrödinger equation, Caro and Ruiz \cite{zbMATH07801151} proved unique
determination of a potential
$V \in L^1((0,T); L^\infty(\R^n))$ that has \emph{super-exponential decay}, that is,
$ e^{\rho |\x|} V \in L^\infty(\Sigma) $ for all $\rho > 0$, where 
$\Sigma = (0, T) \times \R^n$.
Subsequent work has extended this result to unbounded potentials \cite{arXiv:2512.04796}, 
to time-independent potentials with super-linear decay \cite{zbMATH08122191}, and to critically 
singular potentials \cite{arXiv:2602.12122}. These developments demonstrate the robustness of the 
initial-to-final-state inverse problem and its relevance to quantum mechanics.

In this paper, we initiate the study of the initial-to-final inverse problem for the heat equation. 
While the general structure of the problem parallels that of the Schr\"odinger case, the parabolic 
nature of the heat operator introduces significant differences and opportunities to understand
the problem from a different point of view. Our main result establishes uniqueness for the heat 
equation in dimension $n \geq 2$ under the same super-exponential decay assumption as in 
\cite{zbMATH07801151}.

\begin{main-theorem}\label{th:uniqueness} \sl
Consider $V_1$ and $V_2$ in $L^1((0, T); L^\infty (\R^n))$ with $n \geq 2$.
Let $\mathcal{U}_T^j$ with $j \in \{ 1, 2 \}$ denote 
the initial-to-final map associated to $V_j$. If 
$V_1$ and $V_2$ have super-exponential decay, then
\[ \mathcal{U}_T^1 = \mathcal{U}_T^2 \Rightarrow V_1 = V_2. \]
\end{main-theorem}

Our approach to proving \Cref{th:uniqueness} follows the general strategy developed in
\cite{zbMATH07801151} adapted to the parabolic setting.
As we will see in \Cref{prop:integral_formula}, the identity 
$\mathcal{U}_T^1 = \mathcal{U}_T^2$ implies that
\begin{equation}\label{id:orthogonality}
\int_\Sigma (V_1 - V_2) u_1 v_2 \, = 0
\end{equation}
for all $u_1, v_2 \in C([0,T]; L^2(\R^n))$ solutions of the equations 
$(\partial_\tm - \Delta - V_1) u_1 = 0$ and $(\partial_\tm + \Delta + V_2) v_2 = 0$ in 
$\Sigma$.

In order to motivate our approach, let us 
consider the following toy model: Assume that 
$F \in L^1(\R \times \R^n)$ with support in $[0, T] \times \R^n$ satisfies that
\[ \int_\Sigma F  [e^{\tm \Delta}f] [e^{(T - \tm) \Delta}g] \, = 0 \quad \forall f, g \in \mathcal{S}(\R^n). \]
Then, can one ensure that $F(t, x) = 0$ for a.e.
$(t, x) \in \R \times \R^n$? Let us see that the answer is affirmative.
Start by noting that
\[ [e^{\tm \Delta}f] (t,x) = \frac{1}{(2\pi)^{n/2}} \int_{\R^n} e^{ix \cdot \xi} e^{-t|\xi|^2} \widehat{f}(\xi) \, \dd \xi \quad \forall (t, x) \in (0, \infty) \times \R^n \]
solves the equation $(\partial_\tm - \Delta) u = 0$ in $(0, \infty) \times \R^n$, while
\[ [e^{(T - \tm) \Delta}g] (t,x) = \frac{1}{(2\pi)^{n/2}} \int_{\R^n} e^{ix \cdot \eta} e^{-(T-t)|\eta|^2} \widehat{g}(\eta) \, \dd \eta \quad \forall (t, x) \in (-\infty, T) \times \R^n \]
solves the equation $(\partial_\tm + \Delta) v = 0$ in $(-\infty, T) \times \R^n$. The 
assumption on $F$ implies that
\[ \int_{\R \times \R^n} e^{t(|\eta|^2 - |\xi|^2)} e^{i x \cdot (\xi + \eta)} F(t, x) \, \dd (t, x) = 0 \quad \forall \xi, \eta \in \R^n. \]
For every $(\tau, \kappa) \in \R \times \R^n$ such that $\kappa \neq 0$, we choose
\begin{align*}
    \eta=- \frac{1}{2}\big(1+\frac{\tau}{|\kappa|^2}\big)\kappa   ~~~\text{and}~~~\xi= -\frac{1}{2}\big(1-\frac{\tau}{|\kappa|^2}\big)\kappa.
\end{align*}
Note that $\xi+\eta = - \kappa$ and $|\eta|^2 - |\xi|^2 = \tau$.
Then, for every $\kappa \in \R^n \setminus \{ 0 \}$ we have that
\begin{equation}
\label{id:vanishing_LF}
\int_\R e^{t\tau} G_\kappa (t) \,\dd t = 0 \quad \forall \tau \in \R,
\end{equation}
with $ G_\kappa (t) = \mathcal{F}[F(t, \centerdot)](\kappa) $---the Fourier transform of $F$
with respect to the spatial variable.
Since $\supp F \subset [0,T] \times \R^n$, we have that the Fourier transform of $G_\kappa$ 
can be analytically extended to $\C$. The condition 
\eqref{id:vanishing_LF} implies that this extension, denoted by $\widehat{G_\kappa}$, 
satisfies that $\widehat{G_\kappa}(i\tau) = 0$ for all $\tau \in \R$. Then, by analytic 
continuation $\widehat{G_\kappa}(\zeta) = 0$ for all $\zeta \in \C$. In particular,
$\widehat{G_\kappa}(\sigma) = 0$ for all $\sigma \in \R$. Since 
$\widehat{G_\kappa}(\sigma) = \widehat{F}(\sigma, \kappa)$---the full Fourier transform of 
$F$, we have that $\widehat{F}(\sigma, \kappa) = 0$ for every
$(\tau, \kappa) \in \R \times \R^n$ such that $\kappa \neq 0$. The assumption 
$F \in L^1(\R \times \R^n)$ makes its Fourier transform continuous, and by continuity, we 
know that $\widehat{F}(\sigma, \kappa) = 0$ for all $(\tau, \kappa) \in \R \times \R^n$.
Finally, by the injectivity of the Fourier transform, we know that $F(t, x) = 0$ for 
a.e. $(t, x) \in \R \times \R^n$.

After this observation for the toy model, it seems natural to address the real problem
by constructing solutions $u$ and $v$ that are perturbations of $e^{\tm \Delta}f$ and 
$e^{(T - \tm) \Delta}g$ respectively. The correction terms introduced should vanish in some 
appropriate sense so that we can recover the argument from the toy model. Adapting the ideas in 
\cite{zbMATH07801151} to the parabolic case, we will construct solutions
\begin{align*}
& u_1^\nu = e^{|\nu|^2 \tm + \nu \cdot \x} (u_1^\sharp + u_1^\flat)\\
& v_2^\nu = e^{|\nu|^2 (T - \tm) - \nu \cdot \x} (v_2^\sharp + v_2^\flat)
\end{align*}
for any $\nu \in \R^n $ sufficiently large. The leading terms $u_1^\sharp$ and $v_2^\sharp$ are
specifically chosen to be constant in the direction of $\nu$ and solutions of the heat equation in 
the hyperplane orthogonal to $\nu$. These choices are motivated by the fact that it makes the 
correction terms $u_1^\flat$ and $v_2^\flat$ negligible as $|\nu| $ tends to $ \infty$.

The construction of the correction terms is one of the difficulties overcome in this paper since we 
had to identify the appropriate estimate. This task is carried out in \Cref{sec:CGO}, while the
crucial estimate is proved in \Cref{sec:boundedness} as a consequence of 
the moment generating function of a Gaussian random variable, see \Cref{lem:heat-exponential}.

Once the exponentially-growing solutions $u_1^\nu$ and $v_2^\nu$ are available,
the second challenge is to extend the identity \eqref{id:orthogonality} from solutions 
$u_1, v_2 \in C([0,T]; L^2(\R^n))$ to $u_1^\nu$ and $v_2^\nu$ as above.
This step is more straightforward than for the Schrödinger equation since we can exploit the 
parabolic regularity of the heat equation. This is achieved in \Cref{prop:extension-ortho}.

The inverse problem of determining coefficients in parabolic equations from boundary measurements has been extensively studied in the literature. Canuto and Kavian \cite{zbMATH01578850} addressed the identifiability of the density, diffusion, and heat-generation coefficients in a heat equation from boundary measurements on two portions of the boundary, proving uniqueness of the boundary spectral data under suitable hypotheses. In the context of convection-diffusion equations, Bellassoued and Rassas \cite{zbMATH07173395} established log-type stability estimates for the unique determination of the first- and zeroth-order coefficients from the Dirichlet-to-Neumann map in dimension $n \geq 3$. Subsequently, Bellassoued and Ben Fraj \cite{zbMATH07335388} extended these results to the partial data setting, proving logarithmic stability estimates for time-dependent convection and heat-generation coefficients from measurements on an arbitrary open subset of the boundary.

The determination of convection terms in diffusion equations was further investigated
by Caro and Kian \cite{arXiv:1812.08495}, who proved the unique recovery of a general vector-valued
first-order coefficient depending on both time and space variables
from lateral boundary measurements. Choulli and Kian \cite{zbMATH06869653} established logarithmic stability in determining the time-dependent zero-order coefficient from a partial Dirichlet-to-Neumann map, introducing a parabolic Carleman inequality that played a crucial role in constructing CGO solutions vanishing on parts of the boundary.

More recently, Sahoo and Vashisth \cite{zbMATH07170240} studied the unique determination of the convection term and time-dependent density coefficient from partial boundary measurements, while Senapati and Vashisth \cite{zbMATH07597186} established log-log and log-log-log stability estimates for the same problem. Purohit \cite{zbMATH08020896} employed nonlinear Carleman weights to recover time-dependent convection and density terms from measurements on a small subset of the boundary. Mishra, Purohit, and Vashisth \cite{zbMATH08109740} extended the partial data inverse problem for the time-dependent convection-diffusion equation to admissible Riemannian manifolds, recovering both first- and zeroth-order time-dependent perturbations.

Feizmohammadi \cite{zbMATH07802400} studied an inverse boundary value problem for isotropic nonautonomous heat flows, proving uniqueness in all dimensions under an assumption on the thermal diffusivity related to the construction of exponential solutions. Kian \cite{zbMATH07543698} considered the simultaneous determination of multiple classes of parameters---including convection coefficients, density, internal sources, and fractional order---for diffusion equations from a single boundary measurement. Rassas \cite{zbMATH07829439} obtained H\"older stability estimates for determining time-dependent scalar and vector coefficients in the convection-diffusion equation from the Cauchy data set.

More recently, Buisson \cite{arXiv:2607.13778} addressed the inverse problem of determining zeroth- and first-order coefficients in a parabolic equation from partial boundary measurements, developing a new class of Carleman estimates with nonlinear weights and establishing the parabolic analogue of limiting Carleman weights in the elliptic setting. Unlike the existing literature on partial data inverse problems for parabolic equations, both the phase of the special solutions and the weight of the corresponding Carleman estimates are nonlinear, representing a significant advancement in the field.

In contrast to these works, which focus on inverse problems for convection-diffusion equations with boundary measurements, the present paper addresses the initial-to-final inverse problem for the heat equation, where data consist of the final heat density of the system for every initial heat density, and the zeroth-order perturbation is recovered from the initial-to-final map rather than from boundary measurements. This formulation allows us to consider non-compact perturbations, which seem beyond the scope of standard boundary data approaches.

\subsection*{Contents} Here we summarize the contents of the paper. The orthogonality relation
\eqref{id:orthogonality} is derived from the result on \Cref{sec:int_formula}. The details 
about how to construct the exponentially-growing solutions are given in \Cref{sec:CGO}.
In \Cref{sec:proof_main-th}, we prove \Cref{th:uniqueness}. \Cref{sec:boundedness} is
devoted to prove the key inequality used in the construction of the correction terms.
\Cref{sec:extension_ortho} contains the technical details to extend the orthogonality relation
to exponentially-growing solutions. We end the paper by including \Cref{app:integration-parts}
where we prove an integral formula that is fundamental for our arguments.

\section{Initial-to-final integral formula}\label{sec:int_formula}
\begin{proposition}\label{prop:integral_formula}\sl Consider 
$V_1, V_2 \in L^1((0,T); L^\infty(\R^n)) $ and let $ \mathcal{U}_T^1$ and 
$ \mathcal{U}_T^2$ denote their corresponding initial-to-final maps.
Then,
\[\int_\Sigma (V_1 - V_2) u_1 v_2 \, = \int_{\R^n} (\mathcal{U}_T^1 - \mathcal{U}_T^2)f \, g\]
with $u_1 \in C([0,T]; L^2(\R^n))$ the solution of the initial-value problem \eqref{pb:IVP}
with coefficient $V_1$, and $v_2 \in C([0,T]; L^2(\R^n))$ the solution of the following 
final-value problem
\begin{equation}
	\left\{
		\begin{aligned}
		& (\partial_\tm + \Delta + V_2) v_2 = 0 & & \textnormal{in} \enspace \Sigma, \\
		& v_2(T, \centerdot) = g &  & \textnormal{in} \enspace \R^n.
		\end{aligned}
	\right.
	\label{pb:FVP}
\end{equation}
\end{proposition}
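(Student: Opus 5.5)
The plan is to introduce the auxiliary solution $u_2 \in C([0,T]; L^2(\R^n))$ of \eqref{pb:IVP} with coefficient $V_2$ and the same initial datum $f$. Then $\mathcal{U}_T^1 f - \mathcal{U}_T^2 f = (u_1 - u_2)(T,\centerdot)$. Set $w = u_1 - u_2$. It satisfies
\[(\partial_\tm - \Delta - V_2) w = (V_1 - V_2) u_1 \quad \textnormal{in } \Sigma, \qquad w(0,\centerdot) = 0.\]
The identity should then follow by pairing this equation with $v_2$ and integrating by parts in space and time, using that $v_2$ solves the adjoint equation \eqref{pb:FVP}. Formally,
\[\int_\Sigma (V_1-V_2)u_1 v_2 = \int_\Sigma \big[(\partial_\tm - \Delta - V_2) w\big] v_2 = \int_{\R^n} w(T)v_2(T) - \int_{\R^n} w(0)v_2(0) - \int_\Sigma w\,(\partial_\tm + \Delta + V_2)v_2 .\]
The middle boundary term vanishes because $w(0)=0$, and the last integral vanishes by \eqref{pb:FVP}. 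What remains is $\int_{\R^n} (\mathcal{U}_T^1 - \mathcal{U}_T^2)f\, g$, which is the claim.

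The main obstacle is justifying this integration by parts at the regularity we actually have. We only know $w, v_2 \in C([0,T];L^2(\R^n))$ and $V_j \in L^1((0,T);L^\infty(\R^n))$, so $\partial_\tm w$ and $\Delta w$ are merely distributions. My approach is to do the computation in the mild (Duhamel) formulation rather than with derivatives. For a.e.\ $t$, the solutions can be written as
\[w(t) = \int_0^t e^{(t-s)\Delta}\big[V_2 w + (V_1 - V_2)u_1\big](s)\, \dd s, \qquad v_2(t) = e^{(T-t)\Delta} g + \int_t^T e^{(s-t)\Delta} [V_2 v_2](s)\, \dd s.\]
One then pairs $w(T)$ with $g$. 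Self-adjointness of $e^{\tau\Delta}$ on $L^2$ and Fubini, which is justified since all integrands lie in $L^1$ in time with values in $L^2$, let one rearrange the terms so that the $V_2 w\, v_2$ contributions cancel and leave $\int_\Sigma (V_1-V_2)u_1v_2$. This is the standard mild-solution duality argument.

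An alternative I would keep in reserve is to approximate. Take $f, g \in \mathcal{S}(\R^n)$ and smooth, compactly supported-in-time approximations $V_j^\epsilon \to V_j$ in $L^1((0,T);L^\infty(\R^n))$. For these the solutions are regular enough that the formal computation is legitimate. One then passes to the limit using the a priori bound $\sup_t\|u(t)\|_{L^2} \le C\|f\|_{L^2}$, whose constant depends continuously on $\|V\|_{L^1 L^\infty}$ via Gronwall, together with the fact that the solution depends continuously on $V$ in $C([0,T];L^2)$. Finally, density of $\mathcal{S}$ in $L^2$ and boundedness of both sides in $f$ and $g$ extend the identity to all $f, g \in L^2(\R^n)$. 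The paper's appendix on an integral formula presumably supplies exactly this justification, so I would invoke it for the integration-by-parts step.
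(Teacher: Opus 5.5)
Your proposal is correct, and its skeleton is the paper's: introduce $u_2$ with the same initial datum and integrate by parts against $v_2$. The paper applies \Cref{L_A.1} separately to $(u_1,v_2)$ and $(u_2,v_2)$ and subtracts, which is equivalent to your single computation with $w=u_1-u_2$. That lemma also applies to your pair directly, since $(\partial_\tm-\Delta)w=V_2w+(V_1-V_2)u_1$ and $(\partial_\tm+\Delta)v_2=-V_2v_2$ lie in $L^1((0,T);L^2(\R^n))$.

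The genuine difference is your primary justification, the Duhamel duality, and it works. Set $F=V_2w+(V_1-V_2)u_1$. Substitute $e^{(T-s)\Delta}g=v_2(s,\centerdot)-\int_s^T e^{(\sigma-s)\Delta}[V_2v_2](\sigma,\centerdot)\,\dd\sigma$ into $\int_{\R^n}w(T,\centerdot)\,g=\int_0^T\!\int_{\R^n}F(s,\centerdot)\,e^{(T-s)\Delta}g\,\dd s$. Fubini applies, because the double integrand is dominated by $\|F(s,\centerdot)\|_{L^2}\|V_2(\sigma,\centerdot)\|_{L^\infty}\|v_2(\sigma,\centerdot)\|_{L^2}$. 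The correction term is then exactly $-\int_\Sigma V_2wv_2$, which cancels the $V_2wv_2$ part of $\int_\Sigma Fv_2$. This is shorter and avoids mollification. It does presuppose that the $C([0,T];L^2(\R^n))$ solutions coincide with the mild ones; the paper makes the same identification, via uniqueness, in the proof of \Cref{lem:propagation_of_decay}. The paper's regularization lemma is phrased for arbitrary $u,v\in C([0,T];L^2(\R^n))$ with $(\partial_\tm-\Delta)u$ and $(\partial_\tm+\Delta)v$ in $L^1((0,T);L^2(\R^n))$. That generality is what lets it be reused in \Cref{sec:extension_ortho} with cut-off exponentially growing functions.

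Your reserve argument needs a repair. Smooth functions are not dense in $L^1((0,T);L^\infty(\R^n))$: the indicator function of $\{x_1>0\}$ stays at $L^\infty$-distance at least $1/2$ from every continuous function. So $V_j^\varepsilon\to V_j$ can only be arranged in a weaker sense. Mollification, for example, converges a.e.\ with $\|V_j^\varepsilon(t,\centerdot)\|_{L^\infty}$ dominated by $L^1(0,T)$-convergent majorants. Then $u^\varepsilon\to u$ in $C([0,T];L^2(\R^n))$ follows from the a priori bound and dominated convergence. Since your main argument does not rely on this route, it does not affect the proof.
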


\begin{proof}
Beside the solutions $u_1$ and $v_2$, let us consider $u_2 \in C([0,T]; L^2(\R^n))$ the 
solution of the initial-value problem \eqref{pb:IVP} with coefficient $V_2$. After an
integration by parts as the one in \Cref{L_A.1} one can see that
\[\int_\Sigma (V_j - V_2) u_j v_2 = \int_{\R^n} [u_j(T, \centerdot) v_2 (T, \centerdot) - u_j(0, \centerdot) v_2 (0, \centerdot)],\]
which implies that
\[\int_\Sigma (V_1 - V_2) u_1 v_2 = \int_{\R^n} [u_1(T, \centerdot) v_2 (T, \centerdot) - u_1(0, \centerdot) v_2 (0, \centerdot)],\]
and
\[\int_{\R^n} [u_2(T, \centerdot) v_2 (T, \centerdot) - u_2(0, \centerdot) v_2 (0, \centerdot)] \, = 0. \]
Since $u_1(0, \centerdot) = u_2(0, \centerdot) = f$, we conclude that
\[\int_\Sigma (V_1 - V_2) u_1 v_2 = \int_{\R^n} [u_1(T, \centerdot) - u_2(T, \centerdot)] v_2 (T, \centerdot).\]
Rewriting the identity with $u_j(T, \centerdot) = \mathcal{U}_T^j f$ and 
$v_2(T, \centerdot) = g$, we obtained the formula in the statement.
\end{proof}

\section{Exponentially-growing solutions}\label{sec:CGO}
Given a suitable coefficient $V$ defined in $\Sigma$,
we look for solutions of
\begin{equation}
(\partial_\tm - \Delta - V) u = 0 \enspace \textnormal{in} \enspace \Sigma,
\label{eq:heat}
\end{equation}
in the form
\begin{equation}
\label{id:GO}
u = e^{|\nu|^2 \tm + \nu \cdot \x} (u^\sharp + u^\flat),
\end{equation}
with $\nu \in \R^n \setminus \{ 0 \}$, and
$u^\sharp$ chosen so that $e^{|\nu|^2 \tm + \nu \cdot \x} u^\sharp$ is a solution of 
$(\partial_\tm - \Delta) (e^{|\nu|^2 \tm + \nu \cdot \x} u^\sharp) = 0$ in $\Sigma$,
or equivalently $(\partial_\tm - \Delta - 2 \nu \cdot \nabla) u^\sharp = 0 $ in
$\Sigma$. This choice forces $u^\flat$ to satisfy
\begin{equation}
(\partial_\tm - \Delta - 2 \nu \cdot \nabla - V) u^\flat = V u^\sharp \enspace \textnormal{in} \enspace \Sigma.
\label{eq:remainder-equation}
\end{equation}

\subsection{The leading term \texorpdfstring{$u^\sharp$}{usharp}}
We choose $u^\sharp$ so that $\nu \cdot \nabla u^\sharp = 0$ and $(\partial_\tm - \Delta) u^\sharp = 0$ in $\Sigma$. For that, it is convenient to introduce some notation.

For $\hat{\nu} = \nu / |\nu|$, consider the hyperplane
$ H_{\hat{\nu}} = \{ x \in \R^n : \hat{\nu} \cdot x = 0 \} $
endowed with the measure $\sigma_{\hat{\nu}}$.\footnote{The measure $\sigma_{\hat{\nu}}$ is
defined as the push-forward of the 
Lebesgue measure in $\R^{n-1}$ via the map
\[ y^\prime \in \R^{n-1} \longmapsto y=(y^\prime, 0) \in \R^{n-1} \times \{ 0 \} \longmapsto Q y \in H_{\hat{\nu}}, \]
where $Q$ is any matrix in $\Orth(n)$ such that $\hat{\nu} = Q e_n$
with $\{ e_1, \dots , e_n \}$ the standard basis of $\R^n$. The definition of 
$\sigma_{\hat{\nu}}$ is independent of the choice of $Q$.}

For $\nu \in \R^n \setminus \{0\}$ and $\psi \in \mathcal{S} (\R^n)$, we choose
\begin{equation}
\label{id:ushrap}
u^\sharp (t, x) = \frac{1}{(2\pi)^\frac{n-1}{2}} \int_{H_{\hat{\nu}}} e^{ix \cdot \xi} e^{- t |\xi|^2} {\psi}(\xi) \, \dd \sigma_{\hat{\nu}} (\xi) \quad \forall (t, x) \in \Sigma.
\end{equation}
Note that $u^\sharp \in C([0,T]; L^\infty(\R^n))$ and
\[\sup_{t \in [0, T]} \| u^\sharp (t, \centerdot) \|_{L^\infty(\R^n)} \leq \| \psi \|_{L^1(H_{\hat \nu})}.\]

Another useful estimate for $u^\sharp$ is: for every 
$\varepsilon > 0$ we have that
\begin{equation}\label{in:usharp_full-space}
\| e^{-\varepsilon |\hat \nu \cdot x|} u^\sharp \|_{C([0, T]; L^2(\R^n))} \leq \| e^{-\varepsilon |\centerdot|} \|_{L^2(\R)} \| u^\sharp \|_{C([0, T]; L^2(H_{\hat \nu}))} \lesssim \varepsilon^{-1/2} \| \psi \|_{L^2(H_{\hat \nu})}.
\end{equation}
The implicit constant in the last inequality is absolute.

\subsection{The correction term \texorpdfstring{$u^\flat$}{uflat}}
Start with a simple observation: $v$ is solution of
$(\partial_\tm - \Delta) v = \psi$ in $\Sigma$ if and only if
$w (t, x) = v (t, x + 2t \nu)$ is solution of $(\partial_\tm - \Delta - 2 \nu \cdot \nabla) w = \phi $ in
$\Sigma$ with $\phi (t, x) = \psi (t, x + 2t \nu)$.
It is well known that
\[ v: (t, x) \in \overline{\Sigma} \longmapsto \int_0^t \Big( \int_{\R^n} H_n (t-s, x-y) \psi(s, y) \, \dd y \Big) \, \dd s, \]
with $H_n$ denoting the $n$-dimensional heat kernel
\begin{equation}
\label{id:heat_kernel}
H_n: (t, x) \in \Sigma \longmapsto \frac{1}{(4\pi t)^{n/2}} e^{-\frac{|x|^2}{4t}},
\end{equation}
solves $(\partial_\tm - \Delta) v = \psi$ in $\Sigma$ and satisfies
$v(0, \centerdot) = 0$. Then,
\[ w: (t, x) \in \overline{\Sigma} \longmapsto \int_0^t \Big( \int_{\R^n} H_n (t-s, x - y + 2(t-s) \nu) \phi(s, y) \, \dd y \Big) \, \dd s, \]
solves $(\partial_\tm - \Delta - 2 \nu \cdot \nabla) w = \phi $ in $\Sigma$ 
and satisfies $w(0, \centerdot) = 0$. Furthermore,
\[ \|w(t, \centerdot)\|_{L^2(\R^n)} \leq \int_0^t \| H_n (t-s, \centerdot + 2(t-s) \nu) \|_{L^1(\R^n)} \| \phi(s, \centerdot)\|_{L^2(\R^n)} \, \dd s,\]
which implies that
\[ \sup_{t \in [0, T]} \|w(t, \centerdot)\|_{L^2(\R^n)} \leq \| \phi\|_{L^1((0,T); L^2(\R^n))}, \]
since
\begin{equation}
\label{id:heat_L1_tranlation-invariant}
\| H_n (t-s, \centerdot + 2(t-s) \nu) \|_{L^1(\R^n)} = \| H_n (t-s, \centerdot) \|_{L^1(\R^n)} = 1.
\end{equation}

This observation motivates the definition of the operator $S_\nu$ for $\phi $
in the space $ L^1((0,T); L^2(\R^n))$ as
\[ S_\nu \phi: (t, x) \in [0, T] \times \R^n \longmapsto \int_0^t \Big( \int_{\R^n} H_n (t-s, x - y + 2(t-s) \nu) \phi(s, y) \, \dd y \Big) \, \dd s, \]
and shows that it is bounded from $ L^1((0,T); L^2(\R^n))$ to $C([0, T]; L^2(\R^n))$
\begin{equation}
\label{in:Snu_trivial}
\sup_{t \in [0, T]} \|S_\nu \phi (t, \centerdot)\|_{L^2(\R^n)} \leq \| \phi\|_{L^1((0,T); L^2(\R^n))}.
\end{equation}
Additionally, note that $S_\nu \phi (0, \centerdot)= 0$.
Therefore, if we want to solve \eqref{eq:remainder-equation} it is enough to solve
\begin{equation}
\label{eq:Neumann}
(\Id - S_\nu \circ M_V) u^\flat = S_\nu (V u^\sharp),
\end{equation}
with $M_V $ denoting the operator $ \phi \mapsto V \phi$; since applying the differential 
operator $\partial_\tm - \Delta - 2 \nu \cdot \nabla$ to both sides on 
\eqref{eq:Neumann} we would see that $u^\flat$ satisfies the identity 
\eqref{eq:remainder-equation}. A way to solve \eqref{eq:Neumann} consists in finding 
a Banach space where the operator $S_\nu \circ M_V$ is a contraction. The inequality 
\eqref{in:Snu_trivial} could be used for that purpose when 
$\| V \|_{L^1((0,T); L^\infty(\R^n))} < 1$. Otherwise, we could first use it 
for a small period of time $[0, \delta]$ with $\delta > 0$ only depending on $V$,
and then for each time interval
$[j \delta, (j+1)\delta]$ with $j \in \N$. Unfortunately, none of these constructions of $u^\flat$
would make it negligible as $|\nu|$ tends to $\infty$. To do so, we need to kill the translation
invariance in \eqref{id:heat_L1_tranlation-invariant} of the norm with respect to the space variable so 
that we capture the translation $2(t-s)\nu$ in the kernel of the operator $S_\nu$. In
\Cref{sec:boundedness}, we will prove that if $\varepsilon > 0$ and $\nu \in \R^n \setminus\{ 0 \}$,
then
\begin{equation}
\label{in:Snu}
\| e^{\varepsilon \hat{\nu} \cdot \x} S_\nu \phi \|_{L^2(\Sigma)} \leq \frac{e^{\varepsilon^2 T}}{2 \varepsilon |\nu|} \| e^{\varepsilon \hat{\nu} \cdot \x} \phi \|_{L^2(\Sigma)}
\end{equation}
for all $\phi \in \mathcal{D}(\Sigma)$.
This inequality already exhibits a decay in $|\nu|$ that is captured by the fact that the 
weight $e^{\varepsilon \hat{\nu} \cdot \x}$ kills the translation invariance in the direction 
of $\nu$---see \Cref{lem:heat-exponential}.
Furthermore, it is trivial that
\begin{equation}
\label{in:MV}
\| e^{\varepsilon \hat{\nu} \cdot \x} M_V \phi \|_{L^2(\Sigma)} \leq \| V \|_{L^\infty ((0, T) \times \R^n)} \| e^{\varepsilon \hat{\nu} \cdot \x} \phi \|_{L^2(\Sigma)}
\end{equation}
for all $\phi \in \mathcal{D}(\Sigma)$.
From the inequalities \eqref{in:Snu} and \eqref{in:MV}, we will derive the existence of 
$u^\flat$ in a suitable Banach space dictated by the previous inequalities. This space will be defined as follows. Let $X_{\varepsilon\hat{\nu}}(\Sigma)$ denote 
the set of equivalence classes $\phi$ of measurable functions 
that are equal almost everywhere in $\Sigma$ so that 
$e^{\varepsilon \hat{\nu} \cdot \x} \phi \in L^2(\Sigma)$.
This set endowed with the norm
\[ \| \phi \|_{X_{\varepsilon \hat{\nu}}(\Sigma)} = \| e^{\varepsilon \hat{\nu} \cdot \x} \phi \|_{L^2(\Sigma)} \]
becomes a Banach space, and $\mathcal{D}(\Sigma)$ can be embedded as a dense 
subspace.

\begin{proposition}\label{Prop CGO1}\sl
Consider $V \in L^\infty (\Sigma) $ such that
\begin{equation*}
\int_{\Sigma} | e^{\varepsilon |x|} V(t, x) |^2 \, \dd (t, x) < \infty
\end{equation*}
for some $\varepsilon > 0$, and set
$c_V = e^{\varepsilon^2 T} \| V \|_{L^\infty (\Sigma)}/(2\varepsilon)$.
Then, for every $\nu \in \R^n$ such that $|\nu| > c_V$,
there exists $u^\flat \in X_{\varepsilon \hat{\nu}}(\Sigma)$ 
that solves \eqref{eq:Neumann}, and satisfies
\begin{equation}
\label{eq:uflat}
u^\flat = S_\nu [V(u^\sharp + u^\flat)].
\end{equation} 
Furthermore, 
\[ \| u^\flat \|_{X_{\varepsilon \hat{\nu}}(\Sigma)}
\leq \frac{e^{\varepsilon^2 T}}{ \varepsilon |\nu|} 
\| V \|_{X_{\varepsilon \hat{\nu}}(\Sigma)}
\| u^\sharp \|_{L^\infty (\Sigma)} \]
for all $|\nu| \geq 2 c_V$.
\end{proposition}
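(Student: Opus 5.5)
The plan is a Neumann series (contraction) argument in $X_{\varepsilon\hat\nu}(\Sigma)$, built on the weighted estimate \eqref{in:Snu} and the trivial bound \eqref{in:MV}.

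\emph{Boundedness of $T_\nu = S_\nu \circ M_V$.} First extend \eqref{in:Snu} from $\mathcal{D}(\Sigma)$ to all of $X_{\varepsilon\hat\nu}(\Sigma)$ by density. For this I need $S_\nu\phi$ to be well defined as a measurable function for $\phi \in X_{\varepsilon\hat\nu}(\Sigma)$, and the limit of $S_\nu\phi_k$ to agree with it. The kernel is positive, so I would define $S_\nu\phi$ pointwise almost everywhere by dominated or monotone convergence, applied to $|\phi|$ approximated from below. Then $S_\nu$ extends to a bounded operator on $X_{\varepsilon\hat\nu}(\Sigma)$ with norm at most $e^{\varepsilon^2T}/(2\varepsilon|\nu|)$. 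Combined with \eqref{in:MV}, $T_\nu$ is bounded on $X_{\varepsilon\hat\nu}(\Sigma)$ with norm at most $c_V/|\nu|$. Hence for $|\nu|>c_V$ it is a contraction, and $\Id - T_\nu$ is invertible with $\|(\Id-T_\nu)^{-1}\|\le (1-c_V/|\nu|)^{-1}$.

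\emph{Right-hand side.} Next I check that $S_\nu(Vu^\sharp)\in X_{\varepsilon\hat\nu}(\Sigma)$. Since $u^\sharp\in L^\infty(\Sigma)$ with $\|u^\sharp\|_{L^\infty}\le\|\psi\|_{L^1(H_{\hat\nu})}$, we have
\[
\|e^{\varepsilon\hat\nu\cdot\x}Vu^\sharp\|_{L^2(\Sigma)}\le \|V\|_{X_{\varepsilon\hat\nu}(\Sigma)}\|u^\sharp\|_{L^\infty(\Sigma)}.
\]
This is finite because $e^{\varepsilon\hat\nu\cdot x}\le e^{\varepsilon|x|}$, so the hypothesis on $V$ gives $V\in X_{\varepsilon\hat\nu}(\Sigma)$. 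Applying the extended \eqref{in:Snu} then gives
\[
\|S_\nu(Vu^\sharp)\|_{X_{\varepsilon\hat\nu}(\Sigma)}\le \frac{e^{\varepsilon^2T}}{2\varepsilon|\nu|}\|V\|_{X_{\varepsilon\hat\nu}(\Sigma)}\|u^\sharp\|_{L^\infty(\Sigma)}.
\]

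\emph{Solution and bound.} Set $u^\flat=(\Id-T_\nu)^{-1}S_\nu(Vu^\sharp)$. This solves \eqref{eq:Neumann}, and rearranging gives $u^\flat=S_\nu(Vu^\flat)+S_\nu(Vu^\sharp)=S_\nu[V(u^\sharp+u^\flat)]$ by linearity of $S_\nu$ on $X_{\varepsilon\hat\nu}(\Sigma)$, which is \eqref{eq:uflat}. For $|\nu|\ge 2c_V$ the inverse has norm at most $2$. Multiplying this by the factor $1/(2\varepsilon|\nu|)$ above yields exactly the claimed bound $e^{\varepsilon^2T}\|V\|_{X}\|u^\sharp\|_{L^\infty}/(\varepsilon|\nu|)$.

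\emph{Main obstacle.} The delicate point is the extension of $S_\nu$ beyond test functions. Everything else only uses \eqref{in:Snu}, which may be assumed. I would argue as follows: by Fubini–Tonelli and the positivity of the kernel, the integral defining $S_\nu\phi$ converges absolutely almost everywhere for $\phi\in X_{\varepsilon\hat\nu}(\Sigma)$, since applying \eqref{in:Snu} to test functions increasing to $|\phi|$ gives finiteness via Fatou. Uniqueness of the bounded extension then identifies the abstract extension with the integral formula. Finally, I would note that $Vu^\sharp$ need not lie in $\mathcal{D}(\Sigma)$; this is handled by the same density argument.
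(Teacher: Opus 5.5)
Your proposal is correct and follows the paper's route. You combine \eqref{in:Snu} and \eqref{in:MV} to get $\|S_\nu\circ M_V\|\le c_V/|\nu|$, invert $\Id - S_\nu\circ M_V$ by a Neumann series with norm at most $2$ when $|\nu|\ge 2c_V$, and set $u^\flat=(\Id-S_\nu\circ M_V)^{-1}S_\nu(Vu^\sharp)$, exactly as the paper does.

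Your extra step of extending \eqref{in:Snu} beyond $\mathcal{D}(\Sigma)$, which the paper takes for granted, is sound in spirit, but one detail fails as written. Test functions cannot in general increase to $|\phi|$; for the indicator of a closed set with empty interior and positive measure, the only nonnegative continuous minorant is $0$. It is simpler to note that the Minkowski/Young argument in \Cref{thm:key-inequality} never uses smoothness of $\phi$, so it applies verbatim to every $\phi\in X_{\varepsilon\hat\nu}(\Sigma)$.
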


\begin{proof}
    We start by proving that the operator $\Id - S_\nu \circ M_V$
    has a bounded inverse in $X_{\varepsilon \hat{\nu}} (\Sigma)$ for all $|\nu| > c_V$.
By the inequalities \eqref{in:Snu} and \eqref{in:MV} we obtain
    \begin{equation*}
      \| (S_\nu \circ M_V) \phi\|_{X_{\varepsilon \hat{\nu}}(\Sigma)} \leq \frac{e^{\varepsilon^2 T}}{ 2 \varepsilon |\nu|} \|  M_V \phi\|_{X_{\varepsilon \hat{\nu}}(\Sigma)} 
      \leq \frac{c_V}{  |\nu|}  \| \phi\|_{X_{\varepsilon \hat{\nu}}(\Sigma)}.
    \end{equation*}
   Thus, whenever $|\nu|> c_V$, we have that the operator 
       \begin{align*}
   S_\nu \circ M_V:X_{\varepsilon \hat{\nu}}(\Sigma) \longrightarrow   X_{\varepsilon \hat{\nu}}(\Sigma), 
    \end{align*}
    is a contraction. This implies that, whenever $|\nu|> c_V$, $\Id - S_\nu \circ M_V$ has an inverse, and if denoted by $(\Id - S_\nu \circ M_V)^{-1}$, it satisfies
    \begin{align*}
       \| (\Id - S_\nu \circ M_V)^{-1}\|_{\mathcal{L}(X_{\varepsilon \hat{\nu}}(\Sigma))} & \leq \sum_{k=0}^\infty \|S_{\nu} \circ M_{V}\|_{\mathcal{L}(X_{\varepsilon \hat{\nu}}(\Sigma))}^k \\ 
       & = \frac{1}{1-\|S_{\nu} \circ M_{V}\|_{\mathcal{L}(X_{\varepsilon \hat{\nu}}(\Sigma))}} \leq \frac{1}{1-\frac{c_V}{  |\nu|}}.
    \end{align*}
    Here we used the Neumann's series theorem. Therefore, choosing
    $ u^\flat = (\Id - S_\nu \circ M_V)^{-1} \circ S_\nu (V u^\sharp) $
     we obtain
       \begin{align*}
 \| u^\flat \|_{X_{\varepsilon \hat{\nu}}(\Sigma)}
 &\leq \frac{1}{1-\frac{c_V}{  |\nu|}} \| S_\nu (V u^\sharp)\|_{X_{\varepsilon \hat{\nu}}(\Sigma)} \leq \frac{1}{1-\frac{c_V}{  |\nu|}} \frac{e^{\varepsilon^2 T}}{  2 \varepsilon |\nu|} \| V u^\sharp\|_{X_{\varepsilon \hat{\nu}}(\Sigma)}\\
   &\leq \frac{e^{\varepsilon^2 T}}{\varepsilon |\nu|} \| V \|_{X_{\varepsilon \hat{\nu}}(\Sigma)} \| u^\sharp\|_{L^\infty(\Sigma)}.
    \end{align*}
    In the last inequality we have used that $|\nu| \geq 2 c_V$.
    In order to conclude the proof this proposition is enough to realize that \eqref{eq:uflat} is equivalent to \eqref{eq:Neumann}.
\end{proof}
%
%
%
%
%
%
\begin{corollary}\label{cor:regularity_uflat}\sl 
Consider $V \in L^\infty (\Sigma) $ such that
\begin{equation*}
\int_{(0,T)} \| e^{\delta |x|} V(t, \centerdot) \|_{L^\infty (\R^n)}^2 \, \dd t < \infty
\end{equation*}
for some $\delta > 0$. Then, the solution $u^\flat \in X_{\varepsilon \hat \nu} (\Sigma)$, with
$\varepsilon \in (0, \delta)$, announced in \Cref{Prop CGO1}
can be represented by a function in $C([0, T]; L^2(\R^n))\cap L^2((0,T); \dot H^1(\R^n))$ such that 
$u^\flat(0,\centerdot) = 0$.\footnote{For a function $f \in \mathcal{S}(\R^n)$ we have that
$\| f \|_{\dot H^1 (\R^n)} = \| \nabla f  \|_{L^2(\R^n)}$.}
\end{corollary}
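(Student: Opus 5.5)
The plan is to read off regularity from the fixed-point identity \eqref{eq:uflat}, $u^\flat = S_\nu[V(u^\sharp+u^\flat)]$. We show that $\phi := V(u^\sharp + u^\flat)$ lies in $L^1((0,T);L^2(\R^n))$, and in fact in $L^2((0,T);L^2(\R^n))$. Then \eqref{in:Snu_trivial}, together with an energy estimate for $S_\nu$, gives the claimed regularity of $S_\nu\phi$. Since the identity \eqref{eq:uflat} holds in $X_{\varepsilon\hat\nu}(\Sigma)$, i.e.\ almost everywhere, $u^\flat$ is represented by this more regular function.

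\textbf{Step 1: $Vu^\sharp \in L^2(\Sigma)$.} We use $\|u^\sharp\|_{L^\infty(\Sigma)} \le \|\psi\|_{L^1(H_{\hat\nu})}$ and the fact that $e^{\delta|x|}$ bounds $V$. For a.e.\ $t$ we have $|V(t,x)| \le \|e^{\delta|\cdot|}V(t,\cdot)\|_{L^\infty} e^{-\delta|x|}$, and $e^{-\delta|x|}\in L^2(\R^n)$. Hence
\[ \|Vu^\sharp\|_{L^2(\Sigma)}^2 \le \|\psi\|_{L^1}^2\, \|e^{-\delta|\cdot|}\|_{L^2(\R^n)}^2 \int_0^T \|e^{\delta|\cdot|}V(t,\cdot)\|_{L^\infty}^2\,\dd t < \infty. \]

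\textbf{Step 2: $Vu^\flat \in L^2(\Sigma)$.} This is the key point, and it is where $\varepsilon<\delta$ enters. We write
\[ Vu^\flat = \big(e^{-\varepsilon\hat\nu\cdot x}V\big)\big(e^{\varepsilon\hat\nu\cdot x}u^\flat\big). \]
The second factor is in $L^2(\Sigma)$ by \Cref{Prop CGO1}. For the first, $|e^{-\varepsilon\hat\nu\cdot x}V| \le e^{\varepsilon|x|}|V| \le \|e^{\delta|\cdot|}V\|_{L^\infty}$, using $e^{\varepsilon|x|}\le e^{\delta|x|}$. This is bounded, so the product lies in $L^2(\Sigma)$. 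The hypothesis also shows that the assumption of \Cref{Prop CGO1} holds with $\varepsilon$: indeed $\int_\Sigma |e^{\varepsilon|x|}V|^2 \le \|e^{-(\delta-\varepsilon)|\cdot|}\|_{L^2}^2 \int_0^T\|e^{\delta|\cdot|}V\|_{L^\infty}^2 < \infty$. So $u^\flat$ exists for $|\nu|>c_V$. Thus $\phi\in L^2(\Sigma)\subset L^1((0,T);L^2(\R^n))$.

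\textbf{Step 3: regularity of $S_\nu\phi$.} By \eqref{in:Snu_trivial}, $S_\nu\phi\in C([0,T];L^2(\R^n))$ with $S_\nu\phi(0,\cdot)=0$; continuity in time follows from the semigroup structure and dominated convergence, or by density of $\mathcal D(\Sigma)$ together with the uniform bound. For the $\dot H^1$ bound, change variables $v(t,x)=w(t,x-2t\nu)$ with $w=S_\nu\phi$, which reduces to the plain heat Duhamel integral $v=\int_0^t e^{(t-s)\Delta}\tilde\phi(s)\,\dd s$ with $\tilde\phi\in L^2(\Sigma)$. On the Fourier side, $\hat v(t,\xi)=\int_0^t e^{-(t-s)|\xi|^2}\hat{\tilde\phi}(s,\xi)\,\dd s$. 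By Young's inequality in $t$, and using $\||\xi| e^{-\tau|\xi|^2}\|_{L^1(0,T)}\le 1/|\xi|$, we get $\||\xi|\hat v(\cdot,\xi)\|_{L^2(0,T)}\le \|\hat{\tilde\phi}(\cdot,\xi)\|_{L^2(0,T)}$. Integrating in $\xi$ gives $\|\nabla v\|_{L^2(\Sigma)}\le\|\phi\|_{L^2(\Sigma)}$. The spatial translation $x\mapsto x-2t\nu$ preserves the $L^2$ norm of the gradient at each time, so $w\in L^2((0,T);\dot H^1)$.

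Finally, $u^\flat = S_\nu\phi$ almost everywhere, so this function represents $u^\flat$. The main step is Step 2, where the weight $e^{\varepsilon\hat\nu\cdot x}$ (which may grow in the direction $-\hat\nu$) is absorbed by the decay $e^{-\delta|x|}$ of $V$, and this is the only place where $\varepsilon<\delta$ is needed.
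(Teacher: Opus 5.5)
Your overall strategy is the paper's: read regularity off $u^\flat=S_\nu\phi$ with $\phi=V(u^\sharp+u^\flat)$. However, Step 2 does not prove what it claims, and Step 3 depends on that claim.

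The hypothesis controls $\|e^{\delta|\centerdot|}V(t,\centerdot)\|_{L^\infty(\R^n)}$ only as an $L^2(0,T)$ function of $t$, not uniformly in $t$. So $e^{-\varepsilon\hat\nu\cdot\x}V$ need not be bounded on $\Sigma$. For example, $V(t,x)=\min\{1,t^{-1/4}e^{-\delta|x|}\}$ satisfies every assumption. Yet at $|x|=\frac{1}{4\delta}\log(1/t)$ one has $e^{\varepsilon|x|}V(t,x)=t^{-\varepsilon/(4\delta)}$, which blows up as $t\to0$.

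What your pointwise bound really yields is $\|V(t,\centerdot)u^\flat(t,\centerdot)\|_{L^2(\R^n)}\le\|e^{\delta|\centerdot|}V(t,\centerdot)\|_{L^\infty(\R^n)}\|e^{\varepsilon\hat\nu\cdot\x}u^\flat(t,\centerdot)\|_{L^2(\R^n)}$. This is a product of two $L^2(0,T)$ functions, so Cauchy--Schwarz in $t$ gives only $Vu^\flat\in L^1((0,T);L^2(\R^n))$, which is exactly the paper's estimate. That suffices for the $C([0,T];L^2(\R^n))$ statement via \eqref{in:Snu_trivial}. But your $\dot H^1$ argument takes an $L^2(\Sigma)$ input, so the $L^2((0,T);\dot H^1(\R^n))$ conclusion is not yet justified.

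Step 3 also contains a slip. The bound $\||\xi|e^{-\tau|\xi|^2}\|_{L^1(0,T)}\le1/|\xi|$ gives nothing uniform for $\xi$ near $0$. The correct bound is $(1-e^{-T|\xi|^2})/|\xi|\le\sqrt T$, which yields $\|\nabla v\|_{L^2(\Sigma)}\le\sqrt T\|\phi\|_{L^2(\Sigma)}$. The constant-one inequality you state fails for large $T$.

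Either of two repairs closes the gap.
\begin{itemize}
\item Bootstrap. Once $u^\flat=S_\nu\phi\in C([0,T];L^2(\R^n))$ is known from the $L^1((0,T);L^2(\R^n))$ bound, the assumption $V\in L^\infty(\Sigma)$ gives $Vu^\flat\in L^\infty((0,T);L^2(\R^n))\subset L^2(\Sigma)$. Together with your Step 1, this puts $\phi$ in $L^2(\Sigma)$, and the corrected Step 3 then applies.
\item Follow the paper and prove $\|S_\nu\phi\|_{L^2((0,T);\dot H^1(\R^n))}\le2^{-1/2}\|\phi\|_{L^1((0,T);L^2(\R^n))}$ directly. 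Use Minkowski's inequality in $s$ and $\int_0^\infty|\xi|^2e^{-2r|\xi|^2}\,\dd r=1/2$. This avoids needing $\phi\in L^2(\Sigma)$ at all.
\end{itemize}

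Your Step 1 is correct. It uses $e^{-\delta|x|}\in L^2(\R^n)$ and $u^\sharp\in L^\infty(\Sigma)$ instead of the weighted hyperplane bound \eqref{in:usharp_full-space}, which is slightly simpler than the paper's route. Your check that $V$ meets the hypothesis of \Cref{Prop CGO1} for $\varepsilon<\delta$ is a useful addition that the paper leaves implicit.
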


\begin{proof}
By \eqref{eq:uflat} and the fact that $S_\nu$ is a 
bounded operator from $L^1((0, T); L^2(\R^n))$ to 
$C([0, T]; L^2(\R^n))$, we know that, if $V(u^\sharp + u^\flat)$ belongs to 
$L^1((0, T); L^2(\R^n))$, we could choose
$u^\flat$ to be represented by a function in $C([0, T]; L^2(\R^n))$ so that
$u^\flat(0,\centerdot) = 0$.
Let us see that this is in fact the case:
\begin{align*}
\| V(u^\sharp + u^\flat) \|_{L^1((0,T); L^2(\R^n))} \leq & \| e^{\delta |\hat \nu \cdot x|} V \|_{L^1((0,T); L^\infty(\R^n))} \| e^{-\delta |\hat \nu \cdot x|} u^\sharp \|_{C([0, T]; L^2(\R^n))}\\
& \quad + \| e^{- \varepsilon \hat{\nu} \cdot \x} V \|_{L^2((0,T); L^\infty(\R^n))} \| u^\flat \|_{X_{\varepsilon \hat \nu} (\Sigma)}.
\end{align*}
Recall \eqref{in:usharp_full-space} and note that
\begin{align*}
& \| e^{\delta |\hat \nu \cdot x|} V \|_{L^1((0,T); L^\infty(\R^n))} \leq T^{1/2} \| e^{\delta |x|} V \|_{L^2((0,T); L^\infty(\R^n))}, \\
& \| e^{- \varepsilon \hat{\nu} \cdot \x} V \|_{L^2((0,T); L^\infty(\R^n))} \leq \| e^{\delta |x|} V \|_{L^2((0,T); L^\infty(\R^n))}.
\end{align*}

We now prove the estimate
\[
    \|S_\nu\phi\|_{L^2((0,T);\dot H^1(\mathbb R^n))}
    \lesssim
    \|\phi\|_{L^1((0,T);L^2(\mathbb R^n))}.
\]
In order to do that, we write
\[
    S_\nu\phi(t, \centerdot)
    =
    \int_0^t P_{t-s}^\nu[\phi(s, \centerdot)]\, \dd s.
\]
with
\[
    P_r^\nu h(x)
    =
    \int_{\mathbb R^n}
    H_n(r,x-y+2r\nu)h(y)\,\dd y
\]
for $r>0$.

Since
\[
    P_r^\nu h(x)
    =
    (e^{r\Delta}h)(x+2r\nu),
\]
and translations are isometries on $L^2(\mathbb R^n)$, we have
\[
    \|\nabla P_r^\nu h\|_{L^2(\mathbb R^n)}
    =
    \|\nabla e^{r\Delta}h\|_{L^2(\mathbb R^n)}.
\]

By Minkowski's integral inequality,
\begin{align*}
    \|S_\nu\phi\|_{L^2((0,T);\dot H^1(\mathbb R^n))}
    &=
    \left(
        \int_0^T
        \left\|
            \int_0^t
            \nabla P_{t-s}^\nu[\phi(s, \centerdot)]\, \dd s
        \right\|_{L^2(\mathbb R^n)}^2
        \dd t
    \right)^{1/2}
    \\
    &\le
    \int_0^T
    \left(
        \int_s^T
        \|\nabla P_{t-s}^\nu[\phi(s, \centerdot)]\|_{L^2(\mathbb R^n)}^2
        \dd t
    \right)^{1/2}
    \dd s.
\end{align*}
For fixed $s\in(0,T)$, setting $r=t-s$ and using the
translation invariance above gives us
\begin{align*}
    \int_s^T
    \|\nabla P_{t-s}^\nu[\phi(s, \centerdot)]\|_{L^2(\mathbb R^n)}^2
    \dd t
    &=
    \int_0^{T-s}
    \|\nabla e^{r\Delta}[\phi(s, \centerdot)]\|_{L^2(\mathbb R^n)}^2
    \dd r
    \\
    &\le
    \int_0^\infty
    \|\nabla e^{r\Delta}[\phi(s, \centerdot)]\|_{L^2(\mathbb R^n)}^2
    \dd r.
\end{align*}

We next use Plancherel's theorem. Writing 
$\widehat \phi(s, \xi) = \mathcal{F}[\phi(s, \centerdot)](\xi)$, we obtain
\begin{align*}
    \int_0^\infty
    \|\nabla e^{r\Delta}[\phi(s, \centerdot)]\|_{L^2(\mathbb R^n)}^2
    \dd r
    &=
    \int_0^\infty
    \int_{\mathbb R^n}
    |\xi|^2 e^{-2r|\xi|^2}
    |\widehat \phi(s, \xi)|^2
    \,\dd \xi\,\dd r
    \\
    &=
    \int_{\mathbb R^n}
    |\xi|^2 |\widehat \phi(s,\xi)|^2
    \left(
        \int_0^\infty e^{-2r|\xi|^2}\,\dd r
    \right)
    \dd \xi
    \\
    &=
    \frac12
    \int_{\mathbb R^n}
    |\widehat \phi(s,\xi)|^2\,\dd \xi
    \\
    &=
    \frac12
    \|\phi(s, \centerdot)\|_{L^2(\mathbb R^n)}^2.
\end{align*}

Consequently,
\[
    \left(
        \int_s^T
        \|\nabla P_{t-s}^\nu[\phi(s, \centerdot)]\|_{L^2(\mathbb R^n)}^2
        \dd t
    \right)^{1/2}
    \le
    \frac{1}{\sqrt{2}}
    \|\phi(s, \centerdot)\|_{L^2(\mathbb R^n)}.
\]

Substituting this estimate into the Minkowski inequality yields
\begin{align*}
    \|S_\nu\phi\|_{L^2((0,T);\dot H^1(\mathbb R^n))}
    &\le
    \frac{1}{\sqrt{2}}
    \int_0^T
    \|\phi(s, \centerdot)\|_{L^2(\mathbb R^n)}
    \,\dd s
    \\
    &=
    \frac{1}{\sqrt{2}}
    \|\phi\|_{L^1((0,T);L^2(\mathbb R^n))}.
\end{align*}

Therefore,
\[\|S_\nu \phi \|_{L^2((0, T); \dot H^1(\R^n))} \lesssim \| \phi\|_{L^1((0,T); L^2(\R^n))}.\]
This together with \eqref{in:Snu_trivial} ensure that $S_\nu$ is 
bounded from $L^1((0, T); L^2(\R^n))$ to 
$C([0, T]; L^2(\R^n))\cap L^2((0,T); \dot H^1(\R^n))$.
As a consequence, $u^\flat$ can be chosen to be represented
by a function in $C([0, T]; L^2(\R^n))\cap L^2((0,T); \dot H^1(\R^n))$.
\end{proof}

\section{Proof of \texorpdfstring{\Cref{th:uniqueness}}{th:uniqueness}}
\label{sec:proof_main-th}
Consider $V_1, V_2 \in L^\infty (\Sigma) $ so that,
there is an $\varepsilon > 0$ for which
\[\int_\Sigma e^{ \varepsilon |x|} |V_j(t, x) |^2 \, \dd (t, x) < \infty\]
with $j \in \{ 1, 2 \}$. Introduce $W_2 (t, x) = V_2 (T-t, x)$ for all
$(t, x) \in \Sigma$.

As discussed in \Cref{sec:CGO}, 
we can construct
\begin{align*}
& u_1 = e^{|\nu|^2 \tm + \nu \cdot \x} (u_1^\sharp + u_1^\flat)\\
& u_2 = e^{|\nu|^2 \tm - \nu \cdot \x} (u_2^\sharp + u_2^\flat),
\end{align*}
solutions for the equations
\begin{equation*}
(\partial_\tm - \Delta - V_1) u_1 = (\partial_\tm - \Delta - W_2) u_2 = 0 \enspace \textnormal{in} \enspace \Sigma,
\end{equation*}
with $\nu \in \R^n $ such that 
$|\nu| \geq e^{\varepsilon^2 T} \max\big(\| V_1 \|_{L^\infty (\Sigma)}, \| V_2 \|_{L^\infty (\Sigma)}\big)/\varepsilon$. Here
\begin{equation*}
u_j^\sharp (t, x) = \frac{1}{(2\pi)^\frac{n-1}{2}} \int_{H_{\hat{\nu}}} e^{ix \cdot \xi} e^{- t |\xi|^2} {\psi_j}(\xi) \, \dd \sigma_{\hat{\nu}} (\xi) \quad \forall (t, x) \in (0,\infty) \times \R^n.
\end{equation*}
with $\psi_j \in \mathcal{S}(\R^n)$.
For coherence with the choice $-\nu$ for $u_2$, we should have 
written, in the definition of $u_2^\sharp$, $H_{-\hat \nu}$ and $\sigma_{-\hat \nu}$
instead of $H_{\hat \nu}$ and $\sigma_{\hat \nu}$. But observe that 
$H_{\hat \nu} = H_{-\hat \nu}$ and $\sigma_{\hat \nu} = \sigma_{-\hat \nu}$.
Recall that
\[\sup_{t \in [0, T]} \| u_j^\sharp (t, \centerdot) \|_{L^\infty(\R^n)} \lesssim 1.\]
The implicit constant here depends on $\| \psi_j \|_{L^1(H_{\hat \nu})}$.
Additionally, $u_j^\flat$ is a measurable function such that
\[ \| e^{\varepsilon \hat{\nu} \cdot \x} u_1^\flat \|_{L^2(\Sigma)} + \| e^{- \varepsilon \hat{\nu} \cdot \x} u_2^\flat \|_{L^2(\Sigma)} \lesssim \frac{1}{|\nu|}. \]
Here the implicit constant depends on 
$\varepsilon$, $T$ and the quantities $\int_\Sigma e^{ \varepsilon |\x|} |V_j|^2$ and
$\| \psi_j \|_{L^1(H_{\hat \nu})}$.

Now, we introduce $v_2(t, x) = u_2(T-t,x)$ for every $(t, x) \in \Sigma$
which is solution for 
\begin{equation*}
(\partial_\tm + \Delta + V_2) v_2 = 0 \enspace \textnormal{in} \enspace \Sigma.
\end{equation*}
Note that
\[v_2 = e^{|\nu|^2 (T - \tm) - \nu \cdot \x} (v_2^\sharp + v_2^\flat)\]
with $v_2^\sharp (t, x) = u_2^\sharp (T-t,x)$, $v_2^\flat (t, x) = u_2^\flat (T-t,x)$,
\[ \sup_{t \in [0, T]} \| v_2^\sharp (t, \centerdot) \|_{L^\infty(\R^n)} \lesssim 1, \textnormal{ and } \| e^{- \varepsilon \hat{\nu} \cdot \x} v_2^\flat \|_{L^2(\Sigma)} \lesssim \frac{1}{|\nu|}; \]
with the corresponding the implicit constant depending on the same quantities already 
discussed.

We would like to plug these solutions into the orthogonality relation derived from
$\mathcal{U}_T^1 = \mathcal{U}_T^2$ and \Cref{prop:integral_formula}. Unfortunately, they grow
exponentially and do not belong to $C([0,T]; L^2(\R^n))$. Then, our next step is to extend the
orthogonality relation for exponentially growing solutions. In order to undertake this task,
it is convenient to realize that $u_1^\flat$ and $v_2^\flat$ can be represented by functions in
$C([0, T]; L^2(\R^n))\cap L^2((0,T); \dot H^1(\R^n))$. This is justified in \Cref{cor:regularity_uflat} under the 
assumption 
\begin{equation*}
\int_{(0,T)} \| e^{\delta |x|} V(t, \centerdot) \|_{L^\infty (\R^n)}^2 \, \dd t < \infty
\end{equation*}
with $\delta > \varepsilon$.

\begin{proposition}\label{prop:extension-ortho}\sl 
Consider $V_1$ and $V_2 $ in $ L^\infty (\Sigma)$
such that
\[\int_{(0,T)} \| e^{c |x|} V(t, \centerdot) \|_{L^\infty (\R^n)}^2 \, \dd t < \infty\]
for some $c>0$.
Then, we can construct 
\begin{align*}
& u_1^\nu = e^{|\nu|^2 \tm + \nu \cdot \x} (u_1^\sharp + u_1^\flat)\\
& v_2^\nu = e^{|\nu|^2 (T - \tm) - \nu \cdot \x} (v_2^\sharp + v_2^\flat)
\end{align*}
for any $\nu \in \R^n $ such that 
$|\nu| \geq e^{\varepsilon^2 T} \max\big(\| V_1 \|_{L^\infty (\Sigma)}, \| V_2 \|_{L^\infty (\Sigma)}\big)/\varepsilon$ with $\varepsilon \in (0, c)$.

If $\mathcal{U}_{T}^{1}=\mathcal{U}_{T}^{2}$ and 
$c > e^{1/2} \max\big(\| V_1 \|_{L^\infty (\Sigma)}, \| V_2 \|_{L^\infty (\Sigma)}\big)/(2T)^{1/2}$, then
\begin{align}\label{17}
   \int_\Sigma (V_1 - V_2) u_1^\nu v_2^\nu \, =0
\end{align}
whenever $|\nu| < c$.
\end{proposition}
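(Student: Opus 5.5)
Taking the construction of $u_1^\nu$ and $v_2^\nu$ from \Cref{Prop CGO1} and \Cref{cor:regularity_uflat} as given, the plan is to approximate these exponentially growing solutions by solutions in $C([0,T];L^2(\R^n))$. The orthogonality relation from \Cref{prop:integral_formula} holds for the approximants, and we then pass to the limit. We truncate the initial and final data. Write $u_1^\nu(0,\centerdot) = f^\nu := e^{\nu\cdot \x} u_1^\sharp(0,\centerdot)$, which uses $u_1^\flat(0,\centerdot)=0$. Let $\chi_R$ be a smooth cutoff equal to $1$ on $B_R$, and let $u_{1,R}\in C([0,T];L^2(\R^n))$ solve \eqref{pb:IVP} with coefficient $V_1$ and initial datum $\chi_R f^\nu\in L^2$. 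Similarly, let $v_{2,R}$ solve \eqref{pb:FVP} with final datum $\chi_R\, v_2^\nu(T,\centerdot) = \chi_R e^{-\nu\cdot\x}u_2^\sharp(0,\centerdot)$. Since $\mathcal U^1_T=\mathcal U^2_T$, \Cref{prop:integral_formula} gives $\int_\Sigma (V_1-V_2)u_{1,R}v_{2,R}=0$. It remains to show that $u_{1,R}\to u_1^\nu$ and $v_{2,R}\to v_2^\nu$ in a sense strong enough to pass to the limit in this integral.

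To control the differences, note that $w_R = u_1^\nu - u_{1,R}$ solves $(\partial_\tm-\Delta-V_1)w_R=0$ with initial datum $(1-\chi_R)f^\nu$, which has exponential growth at most $e^{|\nu||x|}$ and is supported in $\{|x|\ge R\}$. I will prove an energy estimate with a weight $e^{-\gamma\langle x\rangle}$. Multiplying the equation by $e^{-2\gamma\langle x\rangle}w_R$ and integrating, the gradient of the weight produces a term bounded by $\gamma^2$ times the weighted $L^2$ norm, and $V_1$ contributes a term bounded by $\|V_1\|_\infty$ times the same norm. Gronwall's inequality then gives
\[\sup_t \|e^{-\gamma\langle x\rangle}w_R(t)\|_{L^2}\le e^{(\gamma^2+\|V_1\|_\infty)T}\|e^{-\gamma\langle x\rangle}(1-\chi_R)f^\nu\|_{L^2},\]
and the right-hand side tends to $0$ as $R\to\infty$ provided $\gamma>|\nu|$. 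Justifying this energy identity requires the regularity from \Cref{cor:regularity_uflat}, namely $C_tL^2\cap L^2_t\dot H^1$ for $u^\flat$, together with the smoothness of $u^\sharp$. The same argument applies to $v_2^\nu$.

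Passing to the limit, we bound
\[\Big|\int_\Sigma(V_1-V_2)\big(u_1^\nu v_2^\nu-u_{1,R}v_{2,R}\big)\Big|\]
using the weights $e^{\pm 2\gamma\langle x\rangle}$: put $e^{-\gamma\langle x\rangle}$ on each of the two factors and $e^{2\gamma\langle x\rangle}$ on $V_1-V_2$. The hypothesis $\int\|e^{c|x|}V_j\|_\infty^2\,\dd t<\infty$ absorbs the weight $e^{2\gamma\langle x\rangle}$ when $2\gamma< c$. We also need the uniform bounds $\sup_R\|e^{-\gamma\langle x\rangle}u_{1,R}\|<\infty$, which follow from the same weighted Gronwall estimate. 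So we require $|\nu|<\gamma<c/2$.

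The main obstacle is reconciling this with the hypothesis as stated, namely $|\nu|<c$ together with $c > e^{1/2}\max\|V_j\|_\infty/(2T)^{1/2}$, and with the lower bound $|\nu|\ge e^{\varepsilon^2T}\max\|V_j\|_\infty/\varepsilon$ needed for the construction. The window for $\nu$ must be nonempty. Optimizing $e^{\varepsilon^2 T}/\varepsilon$ over $\varepsilon$ gives the minimizer $\varepsilon=(2T)^{-1/2}$, with minimum value $e^{1/2}(2T)^{1/2}\max\|V_j\|_\infty$. This explains where the stated condition on $c$ comes from. I would therefore fix $\varepsilon=(2T)^{-1/2}$ and track the constants carefully. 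If the factor $2$ in $2\gamma<c$ is too lossy, I would instead split the weight asymmetrically: use $e^{-\varepsilon\hat\nu\cdot\x}$-type weights adapted to $u^\flat$, and a Gaussian or sharper weight for the $u^\sharp$ parts, which are only polynomially bounded along the hyperplane. This would lower the requirement to $|\nu|<c$.
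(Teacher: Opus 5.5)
Your route is genuinely different from the paper's. The paper never approximates $u_1^\nu$ or $v_2^\nu$. It first proves $\int_\Sigma (V_1-V_2)u_1v_2^\nu=0$ for every $u_1\in C([0,T];L^2(\R^n))$. To do this it solves $(\partial_\tm-\Delta-V_2)w_2=(V_1-V_2)u_1$ with $w_2(0,\centerdot)=0$, gets $w_2(T,\centerdot)=0$ from \Cref{Zero Trace 1} and exponential decay of $w_2$ from \Cref{lem:propagation_of_decay}, and then integrates by parts against $v_2^\nu$ with the cut-offs of \Cref{cor:vsharp-vflat}. It then repeats the argument with the roles swapped, using \Cref{Zero Trace 2} and \Cref{lem:usharp,lem:uflat}. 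Your truncation-plus-weighted-Gronwall argument avoids the zero-trace lemmas and the cut-off integrations by parts, and the individual steps you list are sound.

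As written, however, it does not prove the statement. With the radial weights $e^{\mp\gamma\langle \x\rangle}$ you need $|\nu|<\gamma\le c/2$, so the range $c/2\le|\nu|<c$ is never reached. Write $M=\max_j\|V_j\|_{L^\infty(\Sigma)}$. The construction already forces $|\nu|\ge e^{\varepsilon^2T}M/\varepsilon\ge e^{1/2}(2T)^{1/2}M$, so this missing range can be the whole admissible set of $\nu$. The repair you sketch misdiagnoses the loss. $u^\sharp$ is Schwartz along $H_{\hat\nu}$ and merely constant along $\hat\nu$, so no Gaussian weight is needed. The factor $2$ arises because a radial weight charges the growth of $u_1^\nu$ (which occurs only towards $+\hat\nu$) and of $v_2^\nu$ (only towards $-\hat\nu$) in every direction. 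As a result $V_1-V_2$ must absorb $e^{2|\nu||\x|}$.

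The fix stays within your framework: use weights matched to the two exponentials, $\rho_\mp=e^{\mp\nu\cdot\x-\delta\langle\hat\nu\cdot\x\rangle}$.

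\emph{Why the energy estimate still applies.} Your estimate only uses $|\nabla\rho|\le\gamma\rho$, here with $\gamma=|\nu|+\delta$, so it works for these partly growing weights. You do need to know a priori that $\rho_-u_{1,R}$ and $\rho_+v_{2,R}$ lie in $C([0,T];L^2(\R^n))$ with weighted gradients in $L^2(\Sigma)$. This holds because the data are compactly supported and the flow with bounded potential propagates $e^{\mu\cdot\x}$-weights with constant $e^{(|\mu|^2+\|V\|_{L^\infty})T}$.

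\emph{What the weights give.} We have $\rho_-u_1^\nu=e^{|\nu|^2\tm-\delta\langle\hat\nu\cdot\x\rangle}(u_1^\sharp+u_1^\flat)$. The initial difference $\rho_-(u_1^\nu-u_{1,R})(0,\centerdot)=e^{-\delta\langle\hat\nu\cdot\x\rangle}(1-\chi_R)u_1^\sharp(0,\centerdot)$ tends to $0$ in $L^2(\R^n)$. The datum $\rho_-u_{1,R}(0,\centerdot)=e^{-\delta\langle\hat\nu\cdot\x\rangle}\chi_Ru_1^\sharp(0,\centerdot)$ is bounded uniformly in $R$. The same holds for the $v$-side with $\rho_+$.

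\emph{The limit step.} Now $V_1-V_2$ only has to absorb $\rho_-^{-1}\rho_+^{-1}=e^{2\delta\langle\hat\nu\cdot\x\rangle}$. So $2\delta\le c$ suffices, and \eqref{17} follows for every $\nu$ for which $u_1^\nu$ and $v_2^\nu$ are constructed, in particular for $|\nu|<c$.

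Separately, the minimum of $e^{\varepsilon^2T}/\varepsilon$ is $e^{1/2}(2T)^{1/2}$, while the statement has $e^{1/2}(2T)^{-1/2}$. Your computation therefore does not reproduce the stated condition on $c$, although this does not affect the argument.
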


\begin{proof}
We start by proving $\mathcal{U}_{T}^{1}=\mathcal{U}_{T}^{2}$ implies that
\begin{equation}\label{EXP-PHYSICAL}
\int_{\Sigma}\left(V_{1}-V_{2}\right) u_{1} v_{2}^{\nu} \, =0,
\end{equation}
for every $u_{1}\in C([0,T]; L^2(\R^n))$ solution of the $(\partial_\tm - \Delta - V_{1})u_1 = 0$ in $\Sigma$, and every exponentially-growing solution $v_{2}^{\nu}$ with $|\nu| < c$.

In order to do so, we first notice that $(V_1 - V_2) u_1 v_2^\nu \in L^1(\Sigma)$
whenever $|\nu| < c$, since $ e^{\nu \cdot \x + \varepsilon |\hat \nu \cdot \x|} (V_1 - V_2)$
belongs to $L^1((0,T); L^\infty(\R^n))$, while $u_1$, $v_2^\flat$ and $e^{-\varepsilon |\hat \nu \cdot \x| v_2^\sharp}$ belong to $C([0, T] ; L^{2}(\mathbb{R}^{n}))$.

Given $u_{1}$, let $w_{2} \in C([0, T] ; L^{2}(\mathbb{R}^{n}))$ be the solution of 
$$
\begin{cases}\left( \partial_{\tm}-\Delta-V_{2}\right) w_{2}=\left(V_{1}-V_{2}\right) u_{1} & \text { in } \Sigma, \\ w_{2}(0, \centerdot)=0 & \text { in } \mathbb{R}^{n}.\end{cases}
$$
Note that $(V_{1}-V_{2})u_{1} \in L^{1}((0, T) ; L^{2}(\mathbb{R}^{n}))$. Moreover, by 
\Cref{prop:integral_formula} and the fact that $\mathcal{U}_{T}^{1}=\mathcal{U}_{T}^{2}$,
we have that $(V_{1}-V_{2})u_{1}$ satisfies the same conditions as $F$ in 
\Cref{Zero Trace 1}. Consequently, $w_2(T, \centerdot)= 0$.

We have that
\[\int_{\Sigma}\left(V_{1}-V_{2}\right) u_{1} v_{2}^{\nu} \, = \int_{\Sigma}\left( \partial_{\tm}-\Delta-V_{2}\right) w_{2} v_{2}^{\nu} \, = \int_{\Sigma}\left( \partial_{\tm}-\Delta\right) w_{2} v_{2}^{\nu} \, - \int_{\Sigma} V_{2} w_{2} v_{2}^{\nu} \, .
\]
Last identity holds because $\left( \partial_{\tm}-\Delta\right) w_{2} v_{2}^{\nu}$ and 
$V_{2} w_{2} v_{2}^{\nu}$ belongs to $L^1(\Sigma)$ whenever $|\nu| < c$. We know that 
$V_{2} w_{2} v_{2}^{\nu} \in L^1(\Sigma)$ because 
$ e^{\nu \cdot \x + \varepsilon |\hat \nu \cdot \x|} V_2$ belongs to $L^1((0,T); L^\infty(\R^n))$, while $w_2$, $v_2^\flat$ and 
$e^{-\varepsilon |\hat \nu \cdot \x| v_2^\sharp}$ belong to
$C([0, T] ; L^{2}(\mathbb{R}^{n}))$. Consequently, 
$\left( \partial_{\tm}-\Delta\right) w_{2} v_{2}^{\nu} \in L^1(\Sigma)$ since
$(\partial_{\tm}-\Delta)w_2 = V_{2} w_{2} + \left(V_{1}-V_{2}\right) u_{1}$.

By \Cref{cor:vsharp-vflat}
\begin{align*}
\int_{\Sigma}\left( \partial_{\tm}-\Delta\right)& w_{2} v_{2}^{\nu} \, \\
& = \int_{\Sigma}\left( \partial_{\tm}-\Delta\right) w_{2} \, e^{|\nu|^2 (T - \tm) - \nu \cdot \x} v_2^\sharp \, + \int_{\Sigma}\left( \partial_{\tm}-\Delta\right) w_{2} \, e^{|\nu|^2 (T - \tm) - \nu \cdot \x} v_2^\flat \, \\
& = - \int_{\Sigma} w_2 \left( \partial_{\tm}+\Delta\right)(e^{|\nu|^2 (T - \tm) - \nu \cdot \x} v_2^\flat) = \int_{\Sigma} V_{2} w_{2} v_{2}^{\nu} \, .
\end{align*}
Last identity holds because 
$\left( \partial_{\tm}+\Delta\right)(e^{|\nu|^2 (T - \tm) - \nu \cdot \x} v_2^\sharp) = 0$ and
$-\left( \partial_{\tm}+\Delta\right) v_2^\nu = V_2 v_2^\nu$. Consequently, \eqref{EXP-PHYSICAL} holds.

Finally, we prove that \eqref{EXP-PHYSICAL} implies that
\begin{equation}\label{EXP-EXP}
\int_{\Sigma}\left(V_{1}-V_{2}\right) u_{1}^{\nu} v_{2}^{\nu}=0,
\end{equation}
holds for every exponentially-growing solutions $u_1^\nu$ and $v_{2}^{\nu}$ with $|\nu| < c$.

We obviously have that $(V_1 - V_2) u^\nu_1 v_2^\nu \in L^1(\Sigma)$ since
$u^\nu_1 v_2^\nu = e^{|\nu|^2 T} (u^\flat_1 + u^\sharp) (v_2^\sharp + v_2^\flat)$ and
$ e^{2 \varepsilon |\hat \nu \cdot \x|} (V_1 - V_2)$
belongs to $L^1((0,T); L^\infty(\R^n))$, while $u_1^\flat$, $v_2^\flat$, 
$e^{-\varepsilon |\hat \nu \cdot \x| u_1^\sharp}$ and 
$e^{-\varepsilon |\hat \nu \cdot \x| v_2^\sharp}$ belong to
$C([0, T] ; L^{2}(\mathbb{R}^{n}))$.

Now, let $w_{1} \in C([0, T] ; L^{2}(\mathbb{R}^{n}))$ be the solution to 
$$
\begin{cases}\left( \partial_{\tm}+\Delta+V_{1}\right) w_{1}=\left(V_{1}-V_{2}\right)v_{2}^{\nu} & \text { in } \Sigma \\ w_{1}(T, \centerdot)=0 & \text { in } \mathbb{R}^{n}.\end{cases}
$$
Note that $(V_{1}-V_{2})v_2^\nu \in L^{1}((0, T) ; L^{2}(\mathbb{R}^{n}))$.  Moreover, by 
the identity \eqref{EXP-PHYSICAL},
we have that $(V_{1}-V_{2})v_2^\nu$ satisfies the same conditions as $G$ in 
\Cref{Zero Trace 2}. Consequently, $w_1(0, \centerdot)= 0$.

We have that
\[\int_{\Sigma}\left(V_{1}-V_{2}\right) u_{1}^\nu v_{2}^{\nu} \, = \int_{\Sigma}\left( \partial_{\tm}+\Delta+V_1\right) w_1 u_1^{\nu} \, = \int_{\Sigma}\left( \partial_{\tm}+\Delta\right) w_1 u_1^{\nu} \, + \int_{\Sigma} V_1 w_1 u_1^{\nu} \, .
\]
Last identity holds because $\left( \partial_{\tm}+\Delta\right) w_1 u_1^{\nu}$ and 
$V_1 w_1 u_1^{\nu}$ belongs to $L^1(\Sigma)$.

By \Cref{lem:usharp,lem:uflat}, we have
\begin{align*}
\int_{\Sigma}\left( \partial_{\tm}+\Delta\right) w_1 u_1^{\nu} &=
\int_{\Sigma}\left( \partial_{\tm}+\Delta\right) w_1 e^{|\nu|^2 \tm + \nu \cdot \x} u_1^\sharp \, + \int_{\Sigma}\left( \partial_{\tm}+\Delta\right) w_1 e^{|\nu|^2 \tm + \nu \cdot \x} u_1^\flat \, \\
&= - \int_{\Sigma} w_1 \left( \partial_{\tm}-\Delta\right) (e^{|\nu|^2 \tm + \nu \cdot \x} u_1^\flat) \, = - \int_{\Sigma} V_1 w_1 u_1^{\nu} \, .
\end{align*}
Last identity holds because 
$\left( \partial_{\tm} - \Delta\right)(e^{|\nu|^2 \tm + \nu \cdot \x} u_1^\sharp) = 0$ and
$\left( \partial_{\tm} - \Delta\right) u_1^\nu = V_1 u_1^\nu$. Consequently, the identity \eqref{EXP-EXP} holds.
\end{proof}

Therefore, the solutions $u_1$ and $v_2$ can be plugged into \eqref{17}, 
and multiplying by $e^{-|\nu|^2 T}$
 we get 
%
\begin{align}\label{ineq 18}
   \Big| \int_\Sigma   (V_1 - V_2) u_1^\sharp {v_2^\sharp}  \Big|  \lesssim \frac{1}{|\nu|}.
\end{align}
 Here the implicit constant depends on 
$\varepsilon$ and $T$ as well as on $\| \psi_j \|_{L^1(H_{\hat \nu})}$,
$\int_\Sigma e^{ \varepsilon |\x|} |V_j|^2$ and $ \| V_j  \|_{L^\infty(\Sigma)}  $, for $j=1,2$. Indeed, by assumption we made on $V_j$ we get the following three estimates 
\begin{align*}
   \Big| \int_\Sigma   (V_1 - V_2) u_1^\sharp {v_2^\flat} \Big| & =  \Big| \int_\Sigma   e^{\varepsilon \hat{\nu} \cdot \x}(V_1 - V_2) u_1^\sharp \big(e^{- \varepsilon \hat{\nu} \cdot \x}v_2^\flat \big) \Big|  \\
   \lesssim & \, \| e^{\varepsilon | \x|}(V_1 - V_2)  \|_{L^2(\Sigma)} 
   \| u_1^\sharp  \|_{L^\infty(\Sigma)} 
   \| e^{- \varepsilon \hat{\nu} \cdot \x}v_2^\flat   \|_{L^2(\Sigma)} \lesssim \frac{1}{|\nu|}, \\
      \Big| \int_\Sigma   (V_1 - V_2) u_1^\flat {v_2^\sharp}  \Big| &=   \Big| \int_\Sigma   e^{-\varepsilon \hat{\nu} \cdot \x}(V_1 - V_2) \big(e^{\varepsilon \hat{\nu} \cdot \x} u_1^\flat \big) v_2^\sharp  \Big|  \\
   &\lesssim \| e^{\varepsilon | \x|}(V_1 - V_2)  \|_{L^2(\Sigma)}  
   \| e^{\varepsilon \hat{\nu} \cdot \x}u_1^\flat   \|_{L^2(\Sigma)}
    \| v_2^\sharp  \|_{L^\infty(\Sigma)} \lesssim \frac{1}{|\nu|},\\
         \Big| \int_\Sigma   (V_1 - V_2) u_1^\flat {v_2^\flat}  \Big| &=   \Big| \int_\Sigma (V_1 - V_2) \big(e^{\varepsilon \hat{\nu} \cdot \x} u_1^\flat \big)\big(e^{- \varepsilon \hat{\nu} \cdot \x}v_2^\flat \big) \Big|  \\
   \lesssim  \| V_1 &- V_2  \|_{L^\infty(\Sigma)}  
   \| e^{\varepsilon \hat{\nu} \cdot \x}u_1^\flat   \|_{L^2(\Sigma)}
    \| e^{- \varepsilon \hat{\nu} \cdot \x}v_2^\flat \|_{L^2(\Sigma)}\lesssim \frac{1}{|\nu|^2}.
\end{align*}
Combining these, we obtain \eqref{ineq 18}. Therefore by letting $|\nu|$ goes to the infinity we end up by getting
\begin{align}
    \int_\Sigma  (V_1 - V_2) u_1^\sharp {v_2^\sharp}=0.
\end{align}
Continue by recalling that
\begin{align*}
u_1^\sharp (t, x) & = \frac{1}{(2\pi)^\frac{n-1}{2}} \int_{H_{\hat{\nu}}} e^{ix \cdot \xi} e^{- t |\xi|^2} {\psi_1}(\xi) \, \dd \sigma_{\hat{\nu}} (\xi) & & \forall(t, x) \in \Sigma, \\
v_2^\sharp (t, x) & = \frac{1}{(2\pi)^\frac{n-1}{2}} \int_{H_{\hat{\nu}}} e^{i x \cdot \eta} e^{-(T-t)|\eta|^2} {\psi_2}(\eta) \, \dd \sigma_{\hat{\nu}} (\eta) & & \forall(t, x) \in \Sigma.
\end{align*}
Writing
\begin{equation*}
	F (t, x) = \left\{
		\begin{aligned}
		& V_1 (t, x) - V_2 (t, x) &  & \textnormal{if} \enspace (t, x) \in \Sigma \\
		& 0 &  & \textnormal{if} \enspace (t, x) \notin \Sigma,
		\end{aligned}
	\right.
\end{equation*}
and using the fact that 
$\psi_1$ and $\psi_2$ can be chosen arbitrarily in $\mathcal{S}(\R^n)$, we have
$$
\int_{\mathbb{R} \times \mathbb{R}^n} F(t, x) e^{t(|\eta|^2-|\xi|^2)}   {e^{i x \cdot (\xi+\eta)} } \, \dd (t,x) = 0,
$$
for all $\xi, \eta\in \R^n$ such that $\eta \cdot \nu = \xi \cdot \nu=0$.
For every $(\tau, \kappa) \in \mathbb{R} \times \mathbb{R}^n$ such that $\kappa \neq 0$, we 
choose $\nu \in \mathbb{R}^n \backslash\{0\}$ so that $\kappa \cdot \nu=0$. Then, we choose
\begin{align*}
   \xi=- \frac{1}{2}(1-\frac{\tau}{|\kappa|^2})\kappa   ~~~\text{and}~~~ \eta= -\frac{1}{2}(1+\frac{\tau}{|\kappa|^2})\kappa.
\end{align*}
Which imply $|\eta|^2-|\xi|^2=\tau$ and $\xi+\eta=- \kappa$. Then, for every $\kappa \in \R^n \setminus \{ 0 \}$ we have that
\begin{align*}
\int_\R e^{t\tau} G_\kappa (t) \,\dd t = 0 \quad \forall \tau \in \R,
\end{align*}
with $ G_\kappa (t) = \mathcal{F}[F(t, \centerdot)](\kappa) $---the Fourier transform of $F$
with respect to the spatial variable.
Proceeding now exactly as the last part of density argument in
\Cref{sec: intro} we get $F(t, x)=0$ for a.e. $(t, x) \in \mathbb{R} \times \mathbb{R}^n$.
This completes the proof.

\section{Boundedness of \texorpdfstring{$S_\nu$}{Snu} in non translation-invariant spaces}
\label{sec:boundedness}
We define the $1$-dimensional heat kernel as
\begin{equation}
\label{id:1d_heat_kernel}
H: (t, x) \in (0,\infty) \times \R \longmapsto \frac{1}{(4\pi t)^{1/2}} e^{-\frac{x^2}{4t}}.
\end{equation}

In the following lemma, we revisit the two-sided Laplace transform of 
$x \in \R \mapsto H(t, x + \rho t)$, or equivalently, 
the moment generating function of a Gaussian random variable
with mean $-\rho t$ and variance $2t$.
\begin{lemma}\label{lem:heat-exponential}\sl
For $\lambda \in \R$, we have that
\[ \| e^{\lambda \x} H(t, \centerdot + \rho t) \|_{L^1(\R)} = e^{-\lambda\rho t + \lambda^2 t} \]
for all $t \in (0, \infty)$ and $\rho \in \R$.
\end{lemma}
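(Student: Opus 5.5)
Since $H$ is positive, the $L^1$ norm is simply the integral
\[ \int_\R e^{\lambda x} H(t, x+\rho t)\, \dd x, \]
and the plan is to compute it in closed form by completing the square. First we substitute $y = x + \rho t$, which turns the integral into
\[ e^{-\lambda \rho t} \int_\R e^{\lambda y} \frac{1}{(4\pi t)^{1/2}} e^{-\frac{y^2}{4t}} \, \dd y . \]
This isolates the factor $e^{-\lambda\rho t}$ and leaves the two-sided Laplace transform of the centred heat kernel, that is, the moment generating function of a centred Gaussian with variance $2t$ evaluated at $\lambda$.

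Next we complete the square in the exponent:
\[ \lambda y - \frac{y^2}{4t} = -\frac{(y - 2\lambda t)^2}{4t} + \lambda^2 t . \]
Pulling out $e^{\lambda^2 t}$, what remains is $\int_\R H(t, y - 2\lambda t)\,\dd y$. By translation invariance of Lebesgue measure, together with the normalisation $\|H(t,\centerdot)\|_{L^1(\R)}=1$ (the one-dimensional case of \eqref{id:heat_L1_tranlation-invariant}), this integral equals $1$. Combining the factors gives $e^{-\lambda\rho t + \lambda^2 t}$, as claimed.

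There is no real obstacle here. The only points to watch are that positivity of the integrand justifies identifying the $L^1$ norm with the integral, and that Tonelli and the Gaussian normalisation make everything finite for every real $\lambda$, $\rho$ and every $t>0$.
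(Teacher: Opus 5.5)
Your proposal is correct and follows the paper's proof essentially step for step. Like the paper, you factor out $e^{-\lambda\rho t}$ via the translation $y = x+\rho t$, then complete the square to get $e^{\lambda y}H(t,y) = e^{\lambda^2 t}H(t,y-2t\lambda)$, and finally use that the translated heat kernel has unit $L^1$ norm; your added remarks on positivity and finiteness are a harmless extra bit of care.
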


\begin{proof}
Start by observing that
\[ \| e^{\lambda \x} H(t, \centerdot + \rho t) \|_{L^1(\R)} = e^{- \lambda \rho t} \| e^{\lambda \x} H(t, \centerdot) \|_{L^1(\R)}. \]
Furthermore, since
\[-\frac{x^2}{4t} + \lambda x = - \frac{1}{4t} \Big( x^2 - 4t \lambda x \Big) = - \frac{(x-2t\lambda)^2}{4t} + t \lambda^2, \]
we have that
\[ e^{\lambda x} H(t, x) = e^{\lambda^2t} H(t, x - 2t\lambda), \]
and consequently,
\[ \| e^{\lambda \x} H(t, \centerdot) \|_{L^1(\R)} = e^{\lambda^2t} \| H(t, \centerdot - 2t\lambda) \|_{L^1(\R)} = e^{\lambda^2t}. \]
With this last computation, we obtain the identity of the statement.
\end{proof}

\begin{remark} The operator $S_\nu$ is originally defined so that its inputs and outputs take values $\overline{\Sigma}$. However, in the next theorem we let $S_\nu$ also have
inputs and outputs in $(0, \infty) \times \R^n$.
\end{remark}

\begin{theorem}\label{thm:key-inequality}\sl Consider $\lambda > 0$ and $\nu \in \R^n \setminus\{ 0 \}$, and
write $\hat{\nu} = \nu/|\nu|$.
Then, 
\[\| e^{-\lambda^2 \tm + \lambda \hat{\nu} \cdot \x} S_\nu \phi \|_{L^2((0, \infty) \times \R^n)} \leq \frac{1}{2 \lambda |\nu|} \| e^{-\lambda^2 \tm + \lambda \hat{\nu} \cdot \x} \phi \|_{L^2((0, \infty) \times \R^n)} \]
for all $\phi \in \mathcal{D}((0, \infty) \times \R^n)$.
\end{theorem}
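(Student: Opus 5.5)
The plan is to conjugate $S_\nu$ by the weight and recognize the result as a space-time convolution operator with an $L^1$ kernel, then apply Young's inequality (equivalently, Minkowski's integral inequality) on $\R \times \R^n$. For $\phi \in \mathcal{D}((0,\infty)\times\R^n)$, extend $\phi$ by zero to $t \le 0$. Since $S_\nu$ only integrates over $s \in (0,t)$, we can write
\[ e^{-\lambda^2 t + \lambda \hat\nu\cdot x} S_\nu \phi(t,x) = \int_{\R}\int_{\R^n} k(t-s, x-y)\, \big(e^{-\lambda^2 s + \lambda\hat\nu\cdot y}\phi(s,y)\big)\,\dd y\,\dd s, \]
where
\[ k(r,z) = \mathds{1}_{(0,\infty)}(r)\, e^{-\lambda^2 r + \lambda \hat\nu\cdot z} H_n(r, z + 2r\nu). \]
This uses only that the weight factorizes: $e^{-\lambda^2 t+\lambda\hat\nu\cdot x} = e^{-\lambda^2(t-s)+\lambda\hat\nu\cdot(x-y)}\, e^{-\lambda^2 s+\lambda\hat\nu\cdot y}$.

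Young's inequality then gives the weighted $L^2((0,\infty)\times\R^n)$ norm of $S_\nu\phi$ bounded by $\|k\|_{L^1(\R\times\R^n)}$ times the weighted norm of $\phi$. It remains to show $\|k\|_{L^1} = 1/(2\lambda|\nu|)$.

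To compute $\|k(r,\centerdot)\|_{L^1(\R^n)}$ for fixed $r>0$, rotate coordinates with $Q\in\Orth(n)$, $\hat\nu = Qe_n$, and write $z = Q(z', z_n)$. The $n$-dimensional heat kernel factorizes as $H_n(r, z+2r\nu) = H_{n-1}(r,z')\, H(r, z_n + 2r|\nu|)$, and the weight depends only on $z_n$. The $z'$-integral equals $1$. For the $z_n$-integral, \Cref{lem:heat-exponential} with $\rho = 2|\nu|$ gives $e^{-2\lambda|\nu| r + \lambda^2 r}$. Multiplying by the factor $e^{-\lambda^2 r}$, the $\lambda^2$ terms cancel, leaving
\[ \|k(r,\centerdot)\|_{L^1(\R^n)} = e^{-2\lambda|\nu| r}. \]
Integrating over $r\in(0,\infty)$ yields exactly $1/(2\lambda|\nu|)$, which is the constant in the statement.

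The only delicate point is the choice of the time weight $e^{-\lambda^2 t}$. It is exactly what absorbs the growth $e^{\lambda^2 r}$ produced by the exponential moment in \Cref{lem:heat-exponential}. Without it, the kernel's $L^1$ norm would be $\int_0^\infty e^{(\lambda^2 - 2\lambda|\nu|) r}\,\dd r$, which is finite only when $|\nu| > \lambda/2$ and does not give the clean constant. With the time weight, the remaining steps (Fubini for the convolution representation and measurability of $k$) are routine, since $\phi$ is smooth and compactly supported and $k \ge 0$.
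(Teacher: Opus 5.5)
Your proof is correct and follows essentially the same route as the paper: the same kernel computation (rotation, the factorization $H_n = H_{n-1}\,H$, \Cref{lem:heat-exponential} with $\rho = 2|\nu|$, and the time weight $e^{-\lambda^2 t}$ cancelling the factor $e^{\lambda^2 r}$) gives the same constant $1/(2\lambda|\nu|)$. The only difference is organizational: you apply a single Young inequality on $\R\times\R^n$ to the space-time kernel $k$, whereas the paper applies Young in $x$ for each fixed $t$ and then Young in $t$ with the kernel $\mathds{1}_{(0,\infty)}(r)e^{-2\lambda|\nu| r}$, which is the iterated form of the same estimate.
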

\begin{proof}
Consider $Q \in \mathrm{O}(n)$ such that $ \nu=|\nu| Q e_n $, then
$$
\| e^{\lambda \hat{\nu} \cdot \x} S_\nu \phi (t, \centerdot) \|_{L^2(\R^n)}=\| e^{\lambda  \x_n} S_\nu \phi (t, Q \centerdot) \|_{L^2(\R^n)} \quad \forall t \in (0, \infty).
$$
Note that
\[ S_\nu \phi(t, Qx) = \int_0^t \Big( \int_{\R^n} H_n (t-s, Q (x - y) + 2(t-s) \nu) \phi(s, Qy) \, \dd y \Big) \, \dd s, \]
after the change introducing $Qy$ in the integrand. Then, writing
$\psi(s,y)=\phi(s, Qy)$, for all $(t,x) \in (0, \infty) \times \R^n$, and using the fact that
the heat kernel $H_n$ is radially symmetric:
$$
H_n (t-s, Q (x - y) + 2(t-s) \nu) = H_n (t-s, x - y + 2(t-s) |\nu| e_n);
$$
we obtain that
$$
S_\nu \phi(t, Qx)= S_{|\nu| e_n} \psi(t, x) \quad \forall(t, x) \in (0, \infty) \times \mathbb{R}^n
$$
and consequently,
$$
\| e^{\lambda \hat{\nu} \cdot \x} S_\nu \phi (t, \centerdot) \|_{L^2(\R^n)}=\| e^{\lambda  \x_n} S_{|\nu| e_n} \psi (t, \centerdot) \|_{L^2(\R^n)} \quad \forall t \in (0, \infty).
$$
Since
\begin{align*}
e^{\lambda x_n} S_{|\nu| e_n} & \psi(t, x)\\
& = \int_0^t \Big( \int_{\R^n} e^{\lambda (x_n-y_n)} H_n (t-s, x - y + 2(t-s) |\nu| e_n) e^{\lambda y_n} \psi(s,y) \, \dd y \Big) \, \dd s,
\end{align*}
by Young's inequality we derive that
\begin{align*}
\| e^{\lambda  \x_n} S_{|\nu| e_n} \psi (t, \centerdot) & \|_{L^2(\R^n)} \\
\leq & \int_0^t \| e^{\lambda \x_n} H_n (t-s, \centerdot + 2(t-s) |\nu| e_n)\|_{L^1(\R^n)} \| e^{\lambda \x_n} \psi(s, \centerdot)\|_{L^2(\R^n)} \, \dd s.
\end{align*}
Observe that the heat kernel can be split as follows:
\[H_n (t-s, x + 2(t-s) |\nu| e_n) = H_{n-1} (t-s, x^\prime) H (t-s, x_n + 2(t-s) |\nu|)\]
with $x = (x^\prime, x_n)$.
Then, we get
\begin{align*}
\| & e^{\lambda \x_n} H_n (t-s, \centerdot + 2(t-s) |\nu| e_n)\|_{L^1(\R^n)} \\
& = \|H_{n-1} (t-s, \centerdot)\|_{L^1(\R^{n-1})} \|e^{\lambda \x_n} H (t-s, \centerdot + 2(t-s) |\nu|)\|_{L^1(\R)} = e^{(\lambda^2-2\lambda|\nu|)(t-s)}
\end{align*}
by \eqref{id:heat_L1_tranlation-invariant} for the $(n-1)$-dimensional kernel and by
\Cref{lem:heat-exponential} with $\lambda > 0$ and $\rho = 2 |\nu|$.
%
%
%
%
%
%
Hence,
\[ \| e^{\lambda  \x_n} S_{|\nu| e_n} \psi (t, \centerdot) \|_{L^2(\R^n)} \leq \int_0^t e^{-2\lambda|\nu|(t-s)} e^{\lambda^2(t-s)}  \| e^{\lambda \x_n} \psi(s, \centerdot)\|_{L^2(\R^n)} \, \dd s,  \]
and consequently
\begin{align*}
   \| e^{-\lambda^2 t + \lambda \hat{\nu} \cdot \x} & S_\nu \phi (t, \centerdot) \|_{L^2(\R^n)} = \|e^{-\lambda^2 t + \lambda \x_n} S_{|\nu| e_n} \psi(t, \centerdot)\|_{L^2(\R^{n})}\cr
        &\leq \int_\R \mathds{1}_{(0, \infty)}(t-s) e^{-2\lambda |\nu| (t-s)} \mathds{1}_{(0, \infty)}(s)\|e^{-\lambda^2 s + \lambda \x_n}\psi(s, \centerdot)\|_{L^2(\R^{n})}   \, \dd s.
\end{align*}
Here $\mathds{1}_{(0, \infty)}$ denotes the characteristic function of the interval
$(0, \infty)$.
Therefore, applying Young's inequality and noting that 
\[ \int_0^\infty e^{-2\lambda |\nu| t} \, \dd t = \frac{1}{2\lambda  |\nu|}, \]
we conclude
\begin{equation*}
    \| e^{-\lambda^2 \tm + \lambda \hat{\nu} \cdot \x} S_\nu \phi \|_{L^2((0, \infty) \times \R^n)}  \leq \frac{1}{2 \lambda |\nu|} \| e^{-\lambda^2 \tm + \lambda \x_n} \psi \|_{L^2((0, \infty) \times \R^n)}.
\end{equation*}
The desired result now follows from the fact that 
\begin{equation*}
   \| e^{-\lambda^2 \tm + \lambda \x_n} \psi \|_{L^2((0, \infty) \times \R^n)} =  \| e^{-\lambda^2 \tm + \lambda \hat{\nu} \cdot \x}  \phi \|_{L^2((0, \infty) \times \R^n)}.
\end{equation*}
This completes the proof.
\end{proof}

\begin{corollary}\sl Consider $\lambda > 0$ and $\nu \in \R^n \setminus\{ 0 \}$,
then
\[ \| e^{\lambda \hat{\nu} \cdot \x} S_\nu \phi \|_{L^2(\Sigma)} \leq \frac{e^{\lambda^2 T}}{2 \lambda |\nu|} \| e^{\lambda \hat{\nu} \cdot \x} \phi \|_{L^2(\Sigma)} \]
for all $\phi \in \mathcal{D}(\Sigma)$.
\end{corollary}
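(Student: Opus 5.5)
The plan is to deduce the corollary directly from \Cref{thm:key-inequality} by extending $\phi$ by zero and trading the time weight $e^{-\lambda^2 \tm}$ for the constant $e^{\lambda^2 T}$ on $(0,T)$.

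Given $\phi \in \mathcal{D}(\Sigma)$, let $\tilde\phi$ be its extension by zero to $(0,\infty)\times\R^n$; then $\tilde\phi \in \mathcal{D}((0,\infty)\times\R^n)$. The kernel of $S_\nu$ only involves times $s\in(0,t)$. Hence, for $t\in(0,T]$, the function $S_\nu\tilde\phi(t,\centerdot)$ coincides with $S_\nu\phi(t,\centerdot)$ as originally defined on $\overline\Sigma$.

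For the left-hand side, since $e^{-\lambda^2 t}\geq e^{-\lambda^2 T}$ for $t\in(0,T)$, we have
\[
e^{-\lambda^2 T}\| e^{\lambda \hat{\nu} \cdot \x} S_\nu \phi \|_{L^2(\Sigma)} \leq \| e^{-\lambda^2 \tm + \lambda \hat{\nu} \cdot \x} S_\nu \tilde\phi \|_{L^2((0, \infty) \times \R^n)}.
\]
Here we simply drop the contribution from times $t>T$, which is nonnegative.

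\Cref{thm:key-inequality} bounds the right-hand side by
\[
\frac{1}{2\lambda|\nu|}\| e^{-\lambda^2 \tm + \lambda \hat{\nu} \cdot \x} \tilde\phi \|_{L^2((0, \infty) \times \R^n)}.
\]
Because $\tilde\phi$ vanishes for $t>T$ and $e^{-\lambda^2 t}\le 1$, this is at most $\frac{1}{2\lambda|\nu|}\| e^{\lambda \hat{\nu} \cdot \x} \phi \|_{L^2(\Sigma)}$. Multiplying through by $e^{\lambda^2 T}$ gives the claimed inequality.

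The argument has no real obstacle. The only point to check is the causality of $S_\nu$, namely that the values of $S_\nu\tilde\phi$ on $(0,T)$ depend only on $\tilde\phi$ restricted to $(0,T)$. This follows directly from the integration range $\int_0^t$ in the definition of $S_\nu$.
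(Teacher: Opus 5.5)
Your proposal is correct and follows essentially the same route as the paper: extend $\phi$ by zero, bound the left-hand side using $e^{-\lambda^2 t}\ge e^{-\lambda^2 T}$ on $(0,T)$, apply \Cref{thm:key-inequality}, and then use $e^{-\lambda^2 t}\le 1$ together with the support of $\phi$ on the right-hand side. Your explicit remark on the causality of $S_\nu$ (the $\int_0^t$ integration range) spells out a point the paper uses only implicitly.
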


\begin{proof}
For $\phi \in \mathcal{D}(\Sigma)$, let its extension by zero be also denoted by $\phi \in\mathcal{D}((0, \infty) \times \R^n) $. Then,
\begin{align*}
\| e^{\lambda \hat{\nu} \cdot \x} S_\nu \phi \|_{L^2(\Sigma)} & \leq e^{\lambda^2 T} \| e^{-\lambda^2 \tm + \lambda \hat{\nu} \cdot \x} S_\nu \phi \|_{L^2((0, \infty) \times \R^n)}\\
& \leq \frac{e^{\lambda^2 T}}{2 \lambda |\nu|} \| e^{-\lambda^2 \tm + \lambda \hat{\nu} \cdot \x} \phi \|_{L^2((0, \infty) \times \R^n)} \leq \frac{e^{\lambda^2 T}}{2 \lambda |\nu|} \| e^{\lambda \hat{\nu} \cdot \x} \phi \|_{L^2(\Sigma)}.
\end{align*}
This concludes the proof of this corollary.
\end{proof}

\section{Complements to the extension of the orthogonality relation}\label{sec:extension_ortho}

\begin{lemma}\label{Zero Trace 1}\sl Consider $V \in L^1 ((0, T) ; L^\infty (\mathbb{R}^{n}))$ and $F \in L^{1}((0, T) ; L^{2}(\mathbb{R}^{n}))$ such that
$$
\int_{\Sigma} Fv \,=0,
$$
whenever $v \in C([0, T] ; L^{2}(\mathbb{R}^{n}))$ is the solution of the final-value problem
$$
\begin{cases} (\partial_{\tm}+\Delta+V)v=0 & \text { in } \Sigma, \\ v(T, \centerdot)=g & \text { in } \mathbb{R}^{n},\end{cases}
$$
for $g \in L^{2}(\mathbb{R}^{n})$. Then, the solution $w \in C([0, T] ; L^{2}(\mathbb{R}^{n}))$ to the initial-value problem 
$$
\begin{cases} (\partial_{\tm}-\Delta-V)w=F & \text { in } \Sigma, \\ w(0, \centerdot)=0 & \text { in } \mathbb{R}^{n},\end{cases}
$$
satisfies that
$$
w(T, \centerdot)=0.
$$
\end{lemma}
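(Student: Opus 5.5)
We will prove the statement by duality: we show that $w(T,\centerdot)$ is orthogonal to every $g \in L^2(\R^n)$. Fix $g \in L^2(\R^n)$ and let $v \in C([0,T];L^2(\R^n))$ solve the final-value problem with $v(T,\centerdot)=g$. The key identity to establish is the integration-by-parts (Green) formula
\[
\int_\Sigma \big[(\partial_\tm - \Delta - V) w\big]\, v \;-\; \int_\Sigma w\,\big[(\partial_\tm + \Delta + V) v\big] \;=\; \int_{\R^n} \big[w(T,\centerdot)\, v(T,\centerdot) - w(0,\centerdot)\, v(0,\centerdot)\big],
\]
which is precisely the kind of formula announced in \Cref{L_A.1} and already used in the proof of \Cref{prop:integral_formula}. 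Plugging in $(\partial_\tm-\Delta-V)w=F$, $(\partial_\tm+\Delta+V)v=0$, $w(0,\centerdot)=0$ and $v(T,\centerdot)=g$ yields
\[
\int_\Sigma F v = \int_{\R^n} w(T,\centerdot)\, g .
\]
By hypothesis the left-hand side vanishes for every such $v$. Since $g$ ranges over all of $L^2(\R^n)$, we conclude that $w(T,\centerdot)=0$. (If the solutions are real-valued, no conjugation is needed; otherwise replace $g$ by $\overline{g}$.)

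The only real work is justifying the Green formula for $F \in L^1((0,T);L^2(\R^n))$ and $V \in L^1((0,T);L^\infty(\R^n))$, where $w$ and $v$ are merely $C([0,T];L^2)$ (mild/Duhamel) solutions. First, every term is well defined. Indeed, $Fv \in L^1(\Sigma)$ because $\|F(t)\|_{L^2}\,\|v(t)\|_{L^2}$ is integrable in $t$, and $Vwv \in L^1(\Sigma)$ similarly.

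To obtain the identity, we approximate. Take $F_k \in \mathcal{D}(\Sigma)$ with $F_k \to F$ in $L^1L^2$, $V_k$ smooth and bounded with $V_k \to V$ in $L^1L^\infty$ (for instance by mollification, accepting a.e. convergence together with uniform bounds in $L^1L^\infty$), and $g_k \in \mathcal{S}$ with $g_k \to g$ in $L^2$. For these smooth data the corresponding solutions $w_k$, $v_k$ are regular enough that the computation $\frac{\dd}{\dd t}\int w_k v_k = \int (\partial_\tm w_k) v_k + w_k \partial_\tm v_k$ and the integration by parts in $x$ are legitimate. This gives the identity for $(w_k,v_k)$.

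We then pass to the limit using the stability estimate $\sup_t\|u(t)\|_{L^2}\le C\|f\|_{L^2}$ together with its Duhamel/Gronwall version $\sup_t \|w\|_{L^2} \lesssim \|F\|_{L^1L^2}$, with constants depending only on $\|V\|_{L^1L^\infty}$. The differences $w_k-w$ and $v_k-v$ satisfy equations whose sources are controlled by $F_k-F$, $g_k-g$ and $(V_k-V)w$; the last of these tends to zero in $L^1L^2$ by dominated convergence. Hence $w_k\to w$ and $v_k\to v$ in $C([0,T];L^2)$, and every term of the identity converges.

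I expect this regularization and the stability bookkeeping to be the main obstacle. If \Cref{L_A.1} already covers $C([0,T];L^2)$ solutions with $L^1L^2$ sources, then the proof reduces to the short duality argument in the first paragraph.
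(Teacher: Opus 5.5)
Your argument is correct and is essentially the paper's proof: since $(\partial_\tm-\Delta)w = F+Vw$ and $(\partial_\tm+\Delta)v=-Vv$ lie in $L^1((0,T);L^2(\R^n))$, \Cref{L_A.1} applies directly to the $C([0,T];L^2(\R^n))$ solutions $w,v$, and adding and subtracting $Vwv$ gives $\int_{\R^n} w(T,\centerdot)\,g=\int_\Sigma Fv=0$; your separate regularization of $F$, $V$, $g$ is therefore unnecessary. One small slip: in your Green formula the second integral must enter with a plus sign, i.e.\ $\int_\Sigma [(\partial_\tm-\Delta-V)w]\,v+\int_\Sigma w\,[(\partial_\tm+\Delta+V)v]$, though this is harmless here because that term vanishes.
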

\begin{proof}
Consider $g \in L^{2}(\mathbb{R}^{n})$, and let $v$ denote the solution of the final-value 
problem in the statement. By the assumptions on $F$ and $V$, we have that
$(\partial_\tm - \Delta)w$ and $(\partial_\tm + \Delta)v$ belong to $L^1((0,T); L^2(\R^n))$.
Since $w$ and $v$ are in $C([0, T] ; L^{2}(\mathbb{R}^{n}))$ we can apply \Cref{L_A.1} to
obtain that
$$
 \int_{\mathbb{R}^{n}} w(T, \centerdot) g=\int_{\Sigma}\left[( \partial_{\tm}-\Delta)wv+w( \partial_{\tm}+\Delta) v\right].
$$
Now, adding and subtracting $V wv $ we obtain
$$
\int_{\mathbb{R}^{n}} w(T, \centerdot) g=\int_{\Sigma} F v \, + \int_{\Sigma} w ( \partial_{t}+\Delta+V) v\, =0.
$$
Since this holds for every $ g \in L^{2}(\mathbb{R}^{n})$, we have $w(T, \centerdot)=0$.
\end{proof}

We next state without proof a symmetric version of this lemma.

\begin{lemma}\label{Zero Trace 2}\sl Consider $V \in L^{1}((0, T) ; L^{\infty}(\mathbb{R}^{n}))$ and $G \in L^{1}((0, T) ; L^{2}(\mathbb{R}^{n}))$ such that
$$
\int_{\Sigma} G u\, =0,
$$
whenever $u \in C([0, T] ; L^{2}(\mathbb{R}^{n}))$ is the solution of the initial-value problem
$$
\begin{cases} (\partial_{\tm}-\Delta-V)u=0 & \text { in } \Sigma, \\ u(0, \centerdot)=f & \text { in } \mathbb{R}^{n},\end{cases}
$$
for $f \in$ $L^{2}(\mathbb{R}^{n})$. Then, the solution $w \in C([0, T] ; L^{2}(\mathbb{R}^{n}))$ of the problem
$$
\begin{cases}(\partial_{\tm}+\Delta+V) w=G & \text { in } \Sigma,  \\ w(T, \centerdot)=0 & \text { in } \mathbb{R}^{n},\end{cases}
$$
satisfies that
$$
w(0, \centerdot)=0.
$$
\end{lemma}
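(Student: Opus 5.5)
The plan is to mirror the proof of \Cref{Zero Trace 1}, swapping the roles of the initial-value and final-value problems. Fix $f \in L^2(\R^n)$ and let $u \in C([0,T]; L^2(\R^n))$ be the solution of the initial-value problem $(\partial_\tm - \Delta - V)u = 0$ in $\Sigma$ with $u(0,\centerdot) = f$. By hypothesis $\int_\Sigma G u = 0$. Let $w \in C([0,T]; L^2(\R^n))$ be the solution of the final-value problem in the statement; it exists and is unique because $t \mapsto w(T-t,\centerdot)$ solves a forward problem with coefficient $V(T-\tm,\centerdot) \in L^1((0,T);L^\infty(\R^n))$ and source $G(T-\tm,\centerdot) \in L^1((0,T);L^2(\R^n))$.

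First I would check that the integration-by-parts formula of \Cref{L_A.1} applies. We have $(\partial_\tm - \Delta)u = Vu$, which lies in $L^1((0,T);L^2(\R^n))$ since $V \in L^1((0,T);L^\infty(\R^n))$ and $u \in C([0,T];L^2(\R^n))$. Likewise $(\partial_\tm + \Delta)w = G - Vw$ lies in $L^1((0,T);L^2(\R^n))$. Applying \Cref{L_A.1} to the pair $(u,w)$, in the same form used in \Cref{Zero Trace 1}, gives
\[
\int_{\R^n} [u(T,\centerdot) w(T,\centerdot) - u(0,\centerdot) w(0,\centerdot)] = \int_\Sigma \big[ (\partial_\tm - \Delta) u \, w + u \, (\partial_\tm + \Delta) w \big].
\]
Since $w(T,\centerdot) = 0$ and $u(0,\centerdot) = f$, the left-hand side equals $-\int_{\R^n} f\, w(0,\centerdot)$.

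Next, on the right-hand side I would add and subtract $Vuw$. This rewrites it as
\[
\int_\Sigma (\partial_\tm - \Delta - V)u \, w + \int_\Sigma u\,(\partial_\tm + \Delta + V) w = 0 + \int_\Sigma G u = 0.
\]
Both pieces are in $L^1(\Sigma)$ by the memberships just listed, so the splitting is legitimate. Hence $\int_{\R^n} f\, w(0,\centerdot) = 0$ for every $f \in L^2(\R^n)$, and therefore $w(0,\centerdot) = 0$.

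The only delicate point is making sure \Cref{L_A.1} is applicable, that is, that its hypotheses ($C([0,T];L^2)$ functions whose heat operators lie in $L^1((0,T);L^2)$) are met. This was just verified, so no genuine obstacle is expected. If \Cref{L_A.1} were stated only with $u$ as the forward function, the symmetric roles here match exactly how it was used in \Cref{Zero Trace 1}, so it applies verbatim.
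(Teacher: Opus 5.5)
Your proposal is correct and takes essentially the same approach as the paper. The paper states this lemma without proof as the symmetric counterpart of \Cref{Zero Trace 1}, and your argument is exactly that mirrored proof: apply \Cref{L_A.1} to the pair $(u,w)$, add and subtract $Vuw$, and use $w(T,\centerdot)=0$ and $u(0,\centerdot)=f$ to get $\int_{\R^n} f\,w(0,\centerdot)=0$ for every $f\in L^2(\R^n)$.
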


\begin{lemma}\label{lem:propagation_of_decay}\sl Consider $w \in C([0, T] ; L^{2}(\mathbb{R}^{n}))$ such that $w(0, \centerdot)=0$ 
and $(\partial_{\tm}-\Delta)w $ belongs to $ L^{1}((0, T) ; L^{2}(\mathbb{R}^{n}))$.
If there exists $c > 0$ such that 
$e^{c|\x|} (\partial_{\tm}-\Delta)w $ is also in $L^{1}((0, T) ; L^{2}(\mathbb{R}^{n}))$, then
\[ \|e^{\nu \cdot \x} w\|_{C([0, T] ; L^{2}(\mathbb{R}^{n}))} \leq e^{|\nu|^2 T} \|e^{\nu \cdot \x} (\partial_{\tm}-\Delta)w\|_{L^{1}((0, T) ; L^{2}(\mathbb{R}^{n}))} \]
for all $\nu \in \R^n$ such that $|\nu| \leq c$. In particular, 
$e^{c|\x|} w \in C([0, T] ; L^{2}(\mathbb{R}^{n}))$.
\end{lemma}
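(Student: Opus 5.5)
The plan is to use the Duhamel representation together with the Gaussian moment generating function of \Cref{lem:heat-exponential}. Set $F = (\partial_\tm - \Delta) w \in L^1((0,T); L^2(\R^n))$. Since $w \in C([0,T]; L^2(\R^n))$ and $w(0,\centerdot) = 0$, uniqueness for the heat equation in this class gives the Duhamel formula
\[ w(t,x) = \int_0^t \int_{\R^n} H_n(t-s, x-y) F(s,y) \, \dd y \, \dd s. \]
Uniqueness can be justified by duality with \Cref{L_A.1}: the difference of two such solutions pairs to zero against every solution of the backward heat equation. Multiplying by $e^{\nu\cdot x}$ and writing $e^{\nu\cdot x} = e^{\nu\cdot(x-y)} e^{\nu\cdot y}$ gives
\[ e^{\nu \cdot x} w(t,x) = \int_0^t \int_{\R^n} \big[e^{\nu\cdot(x-y)} H_n(t-s,x-y)\big]\, e^{\nu\cdot y}F(s,y)\,\dd y\,\dd s. \]

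Next, apply Young's inequality ($L^1 * L^2 \to L^2$) in $x$ for each fixed $s$, followed by Minkowski in $s$. This yields
\[ \|e^{\nu\cdot\x} w(t,\centerdot)\|_{L^2} \le \int_0^t \|e^{\nu\cdot\x} H_n(t-s,\centerdot)\|_{L^1(\R^n)} \|e^{\nu\cdot\x}F(s,\centerdot)\|_{L^2}\,\dd s. \]
To evaluate the kernel norm, rotate $\nu$ to $|\nu|e_n$ as in the proof of \Cref{thm:key-inequality}. The heat kernel then factorizes, and \Cref{lem:heat-exponential} with $\rho=0$ and $\lambda=|\nu|$ gives $\|e^{\nu\cdot\x}H_n(r,\centerdot)\|_{L^1} = e^{|\nu|^2 r} \le e^{|\nu|^2 T}$. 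This proves the displayed bound, with the supremum taken over $t\in[0,T]$. The hypothesis $|\nu|\le c$ guarantees that the right-hand side is finite, since $e^{\nu\cdot x}\le e^{c|x|}$.

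Continuity in time of $e^{\nu\cdot\x}w$ needs a separate argument. Truncate: let $F_R = F\mathds{1}_{|x|\le R}$ and let $w_R$ be its Duhamel solution. Then $e^{\nu\cdot\x} w_R = \int_0^t e^{(t-s)\Delta_\nu}[e^{\nu\cdot\x}F_R(s)]\,\dd s$, where $e^{\nu\cdot x}H_n(r,x) = e^{|\nu|^2 r}H_n(r,x-2r\nu)$, so the weighted semigroup is a translated and rescaled heat semigroup, which is strongly continuous on $L^2$. Hence $e^{\nu\cdot\x}w_R \in C([0,T];L^2)$. Applying the estimate just proved to $w - w_R$ shows $e^{\nu\cdot\x}w_R \to e^{\nu\cdot\x}w$ uniformly in $t$ in $L^2$, by dominated convergence in $L^1((0,T);L^2)$. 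So the limit is continuous.

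For the final claim, $e^{c|x|} \le \sum_{\pm,\,k=1}^n e^{\pm c\sqrt{n}\,x_k/\sqrt n}$ is not quite right as stated. Instead, use $e^{c|x|}\le \sum_{\omega} e^{c\,\omega\cdot x}$ over a finite set of unit vectors $\omega$; the cleanest choice is the bound $e^{c|x|} \le C_n\sum_{\omega\in\Omega} e^{c'\omega\cdot x}$ for a finite net $\Omega$ of the sphere. This, however, only recovers $e^{c''|x|}$ with $c''<c$. To get exactly $c$, work directly: $e^{c|x|}\le e^{c|x-y|}e^{c|y|}$, and $\|e^{c|\x|}H_n(r,\centerdot)\|_{L^1}<\infty$ uniformly for $r\le T$ by Gaussian decay. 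Young's inequality then gives $e^{c|\x|}w\in L^\infty((0,T);L^2)$, and the same truncation argument gives continuity. I expect this last step, namely continuity in time and the passage to the radial weight, to be the main technical nuisance; the core estimate itself is a direct consequence of \Cref{lem:heat-exponential}.
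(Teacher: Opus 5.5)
Your proof is correct. For the weighted inequality and the time continuity it is the paper's argument in other notation.

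The paper also obtains the Duhamel formula from uniqueness in $C([0,T];L^2(\R^n))$; it takes that uniqueness as known, and your duality argument via \Cref{L_A.1} supplies it. It then uses $e^{-|\nu|^2 r-\nu\cdot z}H_n(r,z)=H_n(r,z+2r\nu)$ to write $e^{-|\nu|^2\tm-\nu\cdot\x}w=S_\nu[e^{-|\nu|^2\tm-\nu\cdot\x}(\partial_\tm-\Delta)w]$. Both the bound and the continuity then follow from \eqref{in:Snu_trivial} and the fact that $S_\nu$ maps into $C([0,T];L^2(\R^n))$. Your kernel computation via \Cref{lem:heat-exponential} and your translated semigroup encode the same identity. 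The truncation $F_R$ is unnecessary at that stage, since $e^{\nu\cdot\x}F\in L^1((0,T);L^2(\R^n))$ already for $|\nu|\le c$.

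The genuine difference is the final claim, and there your route is the better one. The paper asserts that $e^{c|\x|}w\in C([0,T];L^2(\R^n))$ follows from the directional inequality for $|\nu|\le c$. But for $n\ge 2$, finitely many directions only control $e^{c'|\x|}$ with $c'<c$, and not even the whole family suffices. For example, $g$ with $|g(x)|^2=e^{-2c|x|}(1+|x|)^{-n}$ satisfies $\sup_{|\nu|\le c}\|e^{\nu\cdot\x}g\|_{L^2(\R^n)}<\infty$, while $e^{c|\x|}g\notin L^2(\R^n)$. Your use of $|x|\le|x-y|+|y|$, together with $\sup_{0<r\le T}\|e^{c|\centerdot|}H_n(r,\centerdot)\|_{L^1(\R^n)}<\infty$, gets the exact exponent $c$.

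Two repairs are needed in your last paragraph. First, remove the false start about $\sum e^{\pm c\sqrt{n}\,x_k/\sqrt{n}}$ and the abandoned net argument. Second, justify the continuity in time of $e^{c|\x|}w_R$. Your earlier argument does not cover it, because $e^{c|\x|}e^{r\Delta}e^{-c|\x|}$ is not a translated heat semigroup. One way: $F_R$ has compact support in $x$, so the directional result applies to $w_R$ for every $\nu\in\R^n$, in particular $\nu=\pm c\sqrt{n}\,e_k$. Since $e^{2c|x|}\le\sum_{k=1}^n\sum_{\pm}e^{\pm 2c\sqrt{n}\,x_k}$, this gives $e^{c|\x|}w_R\in C([0,T];L^2(\R^n))$. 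Your radial Young bound applied to $w-w_R$ then gives uniform convergence, which completes the argument.
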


\begin{proof}
The condition $e^{c|\x|} w \in C([0, T] ; L^{2}(\mathbb{R}^{n}))$ is a consequence of the inequality for all $\nu \in \R^n$ such that $|\nu| \leq c$, and the assumption
$e^{c|\x|} (\partial_{\tm}-\Delta)w $ belongs to $ L^{1}((0, T) ; L^{2}(\mathbb{R}^{n}))$.

Let us check that the inequality holds.
Note that
\[ \tilde{w}: (t, x) \in \overline{\Sigma} \longmapsto \int_0^t \Big( \int_{\R^n} H_n (t-s, x-y) (\partial_{\tm}-\Delta)w(s, y) \, \dd y \Big) \, \dd s \]
belongs to $C([0, T] ; L^{2}(\mathbb{R}^{n}))$, and satisfies that $\tilde w(0, \centerdot)=0$ 
and $(\partial_{\tm}-\Delta)\tilde w = (\partial_{\tm}-\Delta)w $. By the uniqueness of the problem
$$
\begin{cases} (\partial_{\tm}-\Delta)w=F & \text { in } \Sigma, \\ w(0, \centerdot)=0 & \text { in } \mathbb{R}^{n},\end{cases}
$$
with solutions in $C([0, T] ; L^{2}(\mathbb{R}^{n}))$ and 
$F \in L^{1}((0, T) ; L^{2}(\mathbb{R}^{n}))$, we know that $w = \tilde w$.

Since 
\[e^{-|\nu|^2(t-s) - \nu \cdot (x-y)} H_n (t-s, x-y) = H_n (t-s, x-y + 2(t - s)\nu), \]
one can check by doing a straightforward computation that
\[e^{-|\nu|^2\tm - \nu \cdot \x} w = S_\nu [e^{-|\nu|^2\tm - \nu \cdot \x} (\partial_{\tm}-\Delta)w].\]
From the inequality \eqref{in:Snu_trivial} one concludes that
\[ \|e^{-\nu \cdot \x} w\|_{C([0, T] ; L^{2}(\mathbb{R}^{n}))} \leq e^{|\nu|^2 T} \|e^{-\nu \cdot \x} (\partial_{\tm}-\Delta)w\|_{L^{1}((0, T) ; L^{2}(\mathbb{R}^{n}))}. \]
This can be obviously rephrased as the inequality in the statement.
\end{proof}

\begin{corollary}\label{cor:propag_decay}\sl Consider $w \in C([0, T] ; L^{2}(\mathbb{R}^{n}))$ such that 
$w(T, \centerdot)=0$ 
and $(\partial_{\tm}+\Delta)w $ belongs to $ L^{1}((0, T) ; L^{2}(\mathbb{R}^{n}))$.
If there exists $c > 0$ such that 
$e^{c|\x|} (\partial_{\tm}+\Delta)w $ is also in $ L^{1}((0, T) ; L^{2}(\mathbb{R}^{n}))$, then
\[ \|e^{\nu \cdot \x} w\|_{C([0, T] ; L^{2}(\mathbb{R}^{n}))} \leq e^{|\nu|^2 T} \|e^{\nu \cdot \x} (\partial_{\tm}+\Delta)w\|_{L^{1}((0, T) ; L^{2}(\mathbb{R}^{n}))} \]
for all $\nu \in \R^n$ such that $|\nu| \leq c$. In particular, $e^{c|\x|} w \in C([0, T] ; L^{2}(\mathbb{R}^{n}))$.
\end{corollary}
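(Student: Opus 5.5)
The plan is to reduce \Cref{cor:propag_decay} to \Cref{lem:propagation_of_decay} by reversing time. Define $\tilde w(t,x) = w(T-t,x)$ for $(t,x)\in\overline{\Sigma}$. Then $\tilde w \in C([0,T];L^2(\R^n))$ and $\tilde w(0,\centerdot) = w(T,\centerdot) = 0$. By the chain rule in the distributional sense,
\[
(\partial_\tm - \Delta)\tilde w (t,x) = -[(\partial_\tm + \Delta) w](T-t,x).
\]
Hence $(\partial_\tm-\Delta)\tilde w \in L^1((0,T);L^2(\R^n))$ if and only if $(\partial_\tm+\Delta)w$ is. The same equivalence holds after multiplying by the time-independent weight $e^{c|\x|}$, because the reflection $t\mapsto T-t$ preserves the $L^1((0,T);L^2(\R^n))$ norm.

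Next I check that the distributional identity is legitimate. For a test function $\varphi \in \mathcal{D}(\Sigma)$, set $\check\varphi(t,x)=\varphi(T-t,x)$, which is again in $\mathcal{D}(\Sigma)$. Pairing $\tilde w$ with $(-\partial_\tm - \Delta)\varphi$ and changing variables $t\mapsto T-t$ gives the pairing of $w$ with $(\partial_\tm - \Delta)\check\varphi = -(-\partial_\tm-\Delta)\check\varphi$. This yields exactly the sign flip above. The step is routine, and it is the only place where any care is needed.

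Applying \Cref{lem:propagation_of_decay} to $\tilde w$ then gives
\[
\|e^{\nu\cdot\x}\tilde w\|_{C([0,T];L^2(\R^n))} \le e^{|\nu|^2T}\|e^{\nu\cdot\x}(\partial_\tm-\Delta)\tilde w\|_{L^1((0,T);L^2(\R^n))}
\]
for all $|\nu|\le c$. Undoing the time reflection, both sides coincide with the corresponding norms of $w$ and $(\partial_\tm+\Delta)w$; the minus sign disappears inside the norm. This is the inequality in the statement.

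Finally, the membership $e^{c|\x|}w\in C([0,T];L^2(\R^n))$ follows from the same property of $\tilde w$, which the lemma provides, again by time reflection. I expect no real obstacle beyond bookkeeping of signs.
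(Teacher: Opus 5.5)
Your argument is correct and is exactly the paper's proof: apply \Cref{lem:propagation_of_decay} to the time-reversed function $\tilde w(t,x)=w(T-t,x)$, which satisfies $(\partial_\tm-\Delta)\tilde w=-[(\partial_\tm+\Delta)w](T-t,x)$, and note that all the weighted norms involved are invariant under $t\mapsto T-t$. One small slip in your distributional check: the formal adjoint of $\partial_\tm+\Delta$ is $-\partial_\tm+\Delta$, so the identity you need is $(\partial_\tm-\Delta)\check\varphi=-(-\partial_\tm+\Delta)\check\varphi$ rather than $-(-\partial_\tm-\Delta)\check\varphi$, and with that correction it gives exactly the sign flip you state.
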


\begin{proof}
It follows from \Cref{lem:propagation_of_decay} applied to $\tilde w (t, x) = w (T-t, x) $ for all
$(t, x) \in \overline{\Sigma}$.
\end{proof}

We finish this section with two other lemmas and a corollary of them. These lemmas deal with the
type of exponentially growing solutions we have constructed: $u = e^{|\nu|^2 \tm + \nu \cdot \x} (u^\sharp + u^\flat)$ solution of $(\partial_\tm - \Delta - V) u = 0$ in $\Sigma$ and 
$v = e^{|\nu|^2 (T - \tm) + \nu \cdot \x} (v^\sharp + v^\flat)$ solution of 
$(\partial_\tm + \Delta + V) v = 0$ in $\Sigma$.

\begin{lemma}\label{lem:usharp}\sl
Consider $w \in C([0, T] ; L^{2}(\mathbb{R}^{n}))$ such that 
$w (0, \centerdot) = w(T, \centerdot)=0$ 
and $(\partial_{\tm}+\Delta)w $ belongs to $ L^{1}((0, T) ; L^{2}(\mathbb{R}^{n}))$.
If there exists $c > 0$ such that 
$e^{c|\x|} (\partial_{\tm}+\Delta)w $ is also in $ L^{1}((0, T) ; L^{2}(\mathbb{R}^{n}))$, then
\[\int_{\Sigma}( \partial_{\tm}+\Delta) w \, e^{|\nu|^2 \tm + \nu \cdot \x} u^\sharp \, = 0 \]
for all $\nu \in \R^n$ such that $|\nu| < c$.
\end{lemma}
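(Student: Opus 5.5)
The plan is to integrate by parts, moving $\partial_\tm+\Delta$ from $w$ onto $e^{|\nu|^2 \tm + \nu \cdot \x} u^\sharp$. That function solves $(\partial_\tm-\Delta)(e^{|\nu|^2 \tm + \nu \cdot \x} u^\sharp)=0$, and the boundary terms at $t=0$ and $t=T$ vanish because $w(0,\centerdot)=w(T,\centerdot)=0$. The difficulty is that $e^{|\nu|^2 \tm + \nu \cdot \x} u^\sharp$ is not in $L^2$ and grows exponentially in the direction $\nu$. We compensate with the exponential decay of $w$ supplied by \Cref{cor:propag_decay}.

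\textbf{Step 1 (decay of $w$).} Fix $|\nu|<c$ and choose $\varepsilon>0$ with $|\nu|+\varepsilon<c$. By \Cref{cor:propag_decay}, $e^{\mu\cdot \x} w\in C([0,T];L^2(\R^n))$ for every $|\mu|\le c$. Since $e^{a|\hat\nu\cdot x|}\le e^{a\hat\nu\cdot x}+e^{-a\hat\nu\cdot x}$, this gives $e^{(|\nu|+\varepsilon)|\hat\nu\cdot \x|}w\in C([0,T];L^2(\R^n))$, and similarly $e^{(|\nu|+\varepsilon)|\hat\nu\cdot \x|}(\partial_\tm+\Delta)w\in L^1((0,T);L^2(\R^n))$. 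Combined with \eqref{in:usharp_full-space}, which states $e^{-\varepsilon|\hat\nu\cdot\x|}u^\sharp\in C([0,T];L^2(\R^n))$, this shows that the integrand in the statement lies in $L^1(\Sigma)$, and likewise $w\,(\partial_\tm-\Delta)$ applied to cut-off versions of the test function.

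\textbf{Step 2 (cutoff and integration by parts).} Take $\chi_R(x)=\chi(x/R)$ with $\chi\in C_c^\infty(\R^n)$ equal to $1$ near the origin, and set $\Phi=e^{|\nu|^2 \tm + \nu \cdot \x} u^\sharp$. The function $\chi_R\Phi$ is smooth and bounded with compact spatial support, so $\chi_R\Phi\in C^1([0,T];L^2)\cap C([0,T];H^2)$. We apply \Cref{L_A.1}, the appendix integral formula also used in \Cref{Zero Trace 1}, to the pair $w$ and $\chi_R\Phi$, noting that $(\partial_\tm+\Delta)w\in L^1((0,T);L^2)$. The vanishing traces of $w$ give
\[
\int_\Sigma (\partial_\tm+\Delta)w\,\chi_R\Phi = -\int_\Sigma w\,(\partial_\tm-\Delta)(\chi_R\Phi)= \int_\Sigma w\,\big(2\nabla\chi_R\cdot\nabla\Phi+\Delta\chi_R\,\Phi\big).
\]
If \Cref{L_A.1} is stated with the roles of $\pm\Delta$ reversed, I would instead apply it after the time reversal $t\mapsto T-t$.

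\textbf{Step 3 (limit $R\to\infty$).} The left-hand side converges to the integral in the statement by dominated convergence, using Step 1. On the right-hand side, $|\nabla\chi_R|\lesssim R^{-1}$ and $|\Delta\chi_R|\lesssim R^{-2}$. We also have $\nabla\Phi=e^{|\nu|^2\tm+\nu\cdot\x}(\nu u^\sharp+\nabla u^\sharp)$, where $\nabla u^\sharp$ is bounded by $\|\,|\xi|\psi\|_{L^1(H_{\hat\nu})}$ and, like $u^\sharp$, satisfies the weighted $L^2$ bound \eqref{in:usharp_full-space} with $\psi$ replaced by $|\xi|\psi$. Hence the right-hand side is bounded by $C R^{-1}\|e^{(|\nu|+\varepsilon)|\hat\nu\cdot\x|}w\|_{L^2(\Sigma)}$, with $C$ controlled by the $L^2$ norms of $e^{-\varepsilon|\hat\nu\cdot\x|}u^\sharp$ and $e^{-\varepsilon|\hat\nu\cdot\x|}\nabla u^\sharp$. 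This tends to $0$ as $R\to\infty$, which proves the identity.

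The main obstacle is Step 2. It requires checking that \Cref{L_A.1} applies to a $w$ whose regularity is only $C([0,T];L^2)$ with $(\partial_\tm+\Delta)w\in L^1L^2$. If the lemma demands more, I would first mollify $w$ in space. The mollified $w_\delta$ still vanishes at $t=0$ and $t=T$, and the decay estimates of Step 1 survive mollification because the weights vary only by a bounded factor over unit scales. Passing to the limit $\delta\to0$ at fixed $R$ is then harmless, since $\chi_R\Phi$ is compactly supported in space.
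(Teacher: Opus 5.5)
Your proposal is correct and follows essentially the same route as the paper. You cut off, apply \Cref{L_A.1} to the pair $(\chi_R\Phi, w)$, use $(\partial_\tm-\Delta)\Phi=0$ together with $w(0,\centerdot)=w(T,\centerdot)=0$, and control the $O(1/R)$ commutator terms with the decay of $w$ from \Cref{cor:propag_decay} and the weighted bound \eqref{in:usharp_full-space}. \Cref{L_A.1} applies directly to $w$, so your mollification fallback is not needed.

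The only difference is the cutoff. The paper cuts off only along $\hat\nu$, via $\chi(\hat\nu\cdot\x/R)$, so the identity $\hat\nu\cdot\nabla u^\sharp=0$ removes the $\nabla u^\sharp$ term entirely. Your radial cutoff instead needs the additional weighted $L^2$ bound for $\nabla u^\sharp$, which is valid because replacing $\psi$ by $\xi\psi$ keeps it in $\mathcal{S}$.
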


\begin{proof}
Note that $( \partial_{\tm}+\Delta) w \, e^{|\nu|^2 \tm + \nu \cdot \x} u^\sharp \in L^1(\Sigma)$
since $e^{\nu \cdot \x + \varepsilon |\hat \nu \cdot \x|} ( \partial_{\tm}+\Delta) w $ belongs to 
$ L^1((0,T); L^2(\R^n))$, and $ e^{-\varepsilon |\hat \nu \cdot \x|} u^\sharp $ is in 
$C([0, T] ; L^{2}(\mathbb{R}^{n}))$.

Consider $\chi \in \mathcal{D}(\R)$ so that $\chi(s) = 1$ whenever $|s| \leq 1$.
By the dominate convergence theorem first, and then \Cref{L_A.1}, we have that
\begin{align*}
\int_{\Sigma}( \partial_{\tm}+\Delta) w \, e^{|\nu|^2 \tm + \nu \cdot \x} u^\sharp \, 
&= \lim_{R\to \infty} \int_{\Sigma}( \partial_{\tm}+\Delta) w \, \chi(\hat \nu \cdot \x/R) e^{|\nu|^2 \tm + \nu \cdot \x} u^\sharp \, \\
&= - \lim_{R\to \infty} \int_{\Sigma} w (\partial_{\tm}-\Delta) [\chi(\hat \nu \cdot \x/R) e^{|\nu|^2 \tm + \nu \cdot \x} u^\sharp] \, .
\end{align*}
We could apply \Cref{L_A.1} because
$\chi(\hat \nu \cdot \x/R) e^{|\nu|^2 \tm + \nu \cdot \x} u^\sharp \in C([0, T] ; L^{2}(\mathbb{R}^{n}))$ and $(\partial_{\tm}-\Delta) [\chi(\hat \nu \cdot \x/R) e^{|\nu|^2 \tm + \nu \cdot \x} u^\sharp] \in L^{1}((0, T) ; L^{2}(\mathbb{R}^{n}))$. This latter condition requires a more careful
analysis:
\[ (\partial_{\tm}-\Delta) [\chi(\hat \nu \cdot \x/R) e^{|\nu|^2 \tm + \nu \cdot \x} u^\sharp] = - \Big[\frac{2}{R} |\nu| \chi^\prime (\hat \nu \cdot \x/R) + \frac{1}{R^2} \chi^{\prime \prime} (\hat \nu \cdot \x/R) \Big] e^{|\nu|^2 \tm + \nu \cdot \x} u^\sharp \]
where the right-hand side clearly belongs to $C([0, T] ; L^{2}(\mathbb{R}^{n}))$.

Additionally,
\begin{align*}
\Big| \int_{\Sigma} w \Big[\frac{2}{R} |\nu| \chi^\prime (\hat \nu \cdot \x/R)& + \frac{1}{R^2} \chi^{\prime \prime} (\hat \nu \cdot \x/R) \Big] e^{|\nu|^2 \tm + \nu \cdot \x} u^\sharp \, \Big|\\
& \lesssim \frac{1}{R} \| e^{\nu \cdot \x + \varepsilon |\hat \nu \cdot \x|} w \|_{C([0, T] ; L^{2}(\mathbb{R}^{n}))}  \| e^{-\varepsilon |\hat \nu \cdot \x|} u^\sharp \|_{C([0, T] ; L^{2}(\mathbb{R}^{n}))}.
\end{align*}
The implicit constant here depends on $|\nu|$, $T$ and the choice of $\chi$. Finally,
by \Cref{cor:propag_decay} we know that $e^{c|\x|} w \in C([0, T] ; L^{2}(\mathbb{R}^{n}))$.
Hence, by taking the limit as $R$ tends to $\infty$, we have that
\[ \lim_{R\to \infty} \int_{\Sigma} w (\partial_{\tm}-\Delta) [\chi(\hat \nu \cdot \x/R) e^{|\nu|^2 \tm + \nu \cdot \x} u^\sharp] \, = 0 \]
and consequently the lemma is proved.
\end{proof}

\begin{lemma}\label{lem:uflat}\sl
Consider $w \in C([0, T] ; L^{2}(\mathbb{R}^{n}))$ such that 
$w (0, \centerdot) = w(T, \centerdot)=0$ 
and $(\partial_{\tm}+\Delta)w $ belongs to $ L^{1}((0, T) ; L^{2}(\mathbb{R}^{n}))$.
If there exists $c > 0$ such that 
$e^{c|\x|} (\partial_{\tm}+\Delta)w $ is also in $ L^{1}((0, T) ; L^{2}(\mathbb{R}^{n}))$, then
\[\int_{\Sigma}(\partial_{\tm}+\Delta) w \, e^{|\nu|^2 \tm + \nu \cdot \x} u^\flat \, = - \int_{\Sigma} w ( \partial_{\tm}-\Delta) (e^{|\nu|^2 \tm + \nu \cdot \x} u^\flat) \, .\]
for all $\nu \in \R^n$ such that $|\nu| \leq c$.
\end{lemma}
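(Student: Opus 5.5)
The plan is to justify the integration by parts through a cutoff in the direction of $\hat\nu$, as in \Cref{lem:usharp}. The new difficulty is that $u^\flat$ is not smooth. We only know, by \Cref{cor:regularity_uflat}, that $u^\flat\in C([0,T];L^2(\R^n))\cap L^2((0,T);\dot H^1(\R^n))$ with $u^\flat(0,\centerdot)=0$. We also know that $U^\flat := e^{|\nu|^2 \tm + \nu\cdot\x}u^\flat$ satisfies
\[(\partial_\tm-\Delta)U^\flat = e^{|\nu|^2\tm+\nu\cdot\x} V(u^\sharp+u^\flat),\]
because $(\partial_\tm-\Delta-2\nu\cdot\nabla)u^\flat=V(u^\sharp+u^\flat)$ by \eqref{eq:uflat}.

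First I check that both integrals in the statement are absolutely convergent. By \Cref{cor:propag_decay} we have $e^{c|\x|}w\in C([0,T];L^2(\R^n))$. Since $|\nu|\le c$, the function $e^{\nu\cdot\x}u^\flat$ is controlled by writing it as $e^{(\nu+\varepsilon\hat\nu)\cdot \x}\cdot e^{-\varepsilon\hat\nu\cdot\x}$ acting on $u^\flat$ in $C([0,T];L^2)$. Here I would rather use $u^\flat\in C([0,T];L^2)$ directly together with the weight on $(\partial_\tm+\Delta)w$. For the right-hand side, the weight $e^{\nu\cdot\x}$ is absorbed by $V$, which decays super-exponentially, and by $e^{c|\x|}w$.

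Next I localize. Fix $\chi\in\mathcal D(\R)$ with $\chi=1$ on $[-1,1]$ and set $\chi_R(x)=\chi(|x|/R)$, using a full spatial cutoff here since $u^\flat$ has no structure in $\hat\nu$. Then $\chi_R U^\flat\in C([0,T];L^2)$ vanishes at $t=0$. Its heat operator is
\[(\partial_\tm-\Delta)(\chi_R U^\flat)=\chi_R(\partial_\tm-\Delta)U^\flat-2\nabla\chi_R\cdot\nabla U^\flat-\Delta\chi_R\, U^\flat.\]
The gradient term lies in $L^2((0,T);L^2)\subset L^1((0,T);L^2)$ thanks to the $\dot H^1$ regularity and the compact support, since $\nabla U^\flat=e^{(\cdot)}(\nabla u^\flat+\nu u^\flat)$. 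So \Cref{L_A.1} applies to the pair $w$, $\chi_R U^\flat$. The boundary terms vanish because $w(0,\centerdot)=w(T,\centerdot)=0$, and this gives
\[\int_\Sigma(\partial_\tm+\Delta)w\,\chi_R U^\flat=-\int_\Sigma w\,(\partial_\tm-\Delta)(\chi_R U^\flat).\]

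Finally I let $R\to\infty$. On the left and in the term $\chi_R(\partial_\tm-\Delta)U^\flat$, dominated convergence applies by the integrability shown in the first step. The commutator terms are bounded by
\[R^{-1}\int_\Sigma |w|\,e^{|\nu|^2T+\nu\cdot\x}\big(|\nabla u^\flat|+|u^\flat|\big)\mathds 1_{\{R\le|x|\le 2R\}}\lesssim R^{-1}\|e^{|\nu||\x|}w\|_{L^2(\Sigma)}\|u^\flat\|_{L^2((0,T);H^1)},\]
which tends to $0$.

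The main obstacle is the endpoint case $|\nu|=c$ in the weights, where $e^{\nu\cdot\x}w$ is only guaranteed in $L^2$ with no spare decay. The bound above uses exactly the weight $e^{|\nu||\x|}$, which is fine since \Cref{cor:propag_decay} holds for $|\nu|\le c$. It also uses that $u^\flat$ itself, not only $e^{\varepsilon\hat\nu\cdot\x}u^\flat$, lies in $L^2((0,T);H^1)$, which is provided by \Cref{cor:regularity_uflat}.
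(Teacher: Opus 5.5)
Your proposal is correct and follows essentially the paper's argument. You cut off, apply \Cref{L_A.1} to $w$ and the truncated $e^{|\nu|^2 \tm + \nu\cdot\x}u^\flat$, and use $u^\flat \in C([0,T];L^2(\R^n))\cap L^2((0,T);\dot H^1(\R^n))$ to place the commutator in $L^1((0,T);L^2(\R^n))$. You then bound the commutator terms by $R^{-1}$ times weighted norms of $w$ from \Cref{cor:propag_decay}, and pass to the limit by dominated convergence via $(\partial_\tm-\Delta)(e^{|\nu|^2\tm+\nu\cdot\x}u^\flat)=e^{|\nu|^2\tm+\nu\cdot\x}V(u^\sharp+u^\flat)$. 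The only difference is cosmetic: the paper uses the directional cutoff $\chi(\hat\nu\cdot\x/R)$, which suffices because the exponential weight depends only on $\hat\nu\cdot x$, while your radial cutoff $\chi(|\x|/R)$ works equally well.
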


\begin{proof}
Note that
$(\partial_{\tm}+\Delta) w \, e^{|\nu|^2 \tm + \nu \cdot \x} u^\flat \in L^1(\Sigma)$
since $e^{\nu \cdot \x} ( \partial_{\tm}+\Delta) w $ belongs to 
$ L^1((0,T); L^2(\R^n))$, and $ u^\flat $ is in 
$C([0, T] ; L^{2}(\mathbb{R}^{n}))$.

Consider $\chi \in \mathcal{D}(\R)$ so that $\chi(s) = 1$ whenever $|s| \leq 1$.
By the dominate convergence theorem, first and then, \Cref{L_A.1} we have that
\begin{align*}
\int_{\Sigma}( \partial_{\tm}+\Delta) w \, e^{|\nu|^2 \tm + \nu \cdot \x} u^\flat \, 
&= \lim_{R\to \infty} \int_{\Sigma}( \partial_{\tm}+\Delta) w \, \chi(\hat \nu \cdot \x/R) e^{|\nu|^2 \tm + \nu \cdot \x} u^\flat \, \\
&= - \lim_{R\to \infty} \int_{\Sigma} w (\partial_{\tm}-\Delta) [\chi(\hat \nu \cdot \x/R) e^{|\nu|^2 \tm + \nu \cdot \x} u^\flat] \, .
\end{align*}
We could apply \Cref{L_A.1} because
$\chi(\hat \nu \cdot \x/R) e^{|\nu|^2 \tm + \nu \cdot \x} u^\flat \in C([0,T]; L^2(\R^n))$
and $(\partial_{\tm}-\Delta) [\chi(\hat \nu \cdot \x/R) e^{|\nu|^2 \tm + \nu \cdot \x} u^\flat] \in L^{1}((0, T) ; L^{2}(\mathbb{R}^{n}))$.
As in \Cref{lem:usharp}, checking this letter condition requires a more careful
analysis: one can see that
%
%
%
%
\begin{align*}
(\partial_{\tm}-\Delta) [\chi(\hat \nu \cdot \x/R) e^{|\nu|^2 \tm + \nu \cdot \x} u^\flat] =
- \Big[\frac{2}{R} |\nu| \chi^\prime (\hat \nu \cdot \x/R) + \frac{1}{R^2} \chi^{\prime \prime} (\hat \nu \cdot \x/R) \Big] e^{|\nu|^2 \tm + \nu \cdot \x} u^\flat \\
- \frac{2}{R} \chi^\prime (\hat \nu \cdot \x/R) e^{|\nu|^2 \tm + \nu \cdot \x} \hat \nu \cdot \nabla u^\flat
+ \chi(\hat \nu \cdot \x/R) (\partial_{\tm}-\Delta)(e^{|\nu|^2 \tm + \nu \cdot \x} u^\flat)
\end{align*}
The first two terms in the right-hand side belongs to 
$L^2(\Sigma)$---it might be convenient to recall at this point that 
$u^\flat \in C([0, T]; L^2(\R^n))\cap L^2((0,T); \dot H^1(\R^n))$. The last term satisfies
\begin{equation}
\label{id:RHS-uflat}
\chi(\hat \nu \cdot \x/R) (\partial_{\tm}-\Delta)(e^{|\nu|^2 \tm + \nu \cdot \x} u^\flat) = \chi(\hat \nu \cdot \x/R) e^{|\nu|^2 \tm + \nu \cdot \x} V (u^\sharp + u^\flat),
\end{equation}
which is obviously in $L^1((0,T); L^2(\R^n))$.

Finally, in order to conclude the proof of this lemma we have to check
that
\begin{align*}
& \lim_{R\to \infty} \Big| \int_{\Sigma} w \Big[\frac{2}{R} |\nu| \chi^\prime (\hat \nu \cdot \x/R) + \frac{1}{R^2} \chi^{\prime \prime} (\hat \nu \cdot \x/R) \Big] e^{|\nu|^2 \tm + \nu \cdot \x} u^\flat \Big| = 0,\\
& \lim_{R\to \infty} \Big| \int_{\Sigma} w \frac{2}{R} \chi^\prime (\hat \nu \cdot \x/R) e^{|\nu|^2 \tm + \nu \cdot \x} \hat \nu \cdot \nabla u^\flat \Big| = 0,
\end{align*}
and
\[\lim_{R\to \infty} \int_{\Sigma} w \chi(\hat \nu \cdot \x/R) (\partial_{\tm}-\Delta)(e^{|\nu|^2 \tm + \nu \cdot \x} u^\flat) \, = \int_{\Sigma} w (\partial_{\tm}-\Delta)(e^{|\nu|^2 \tm + \nu \cdot \x} u^\flat) \, .  \]
The first two limits follows by the same type of arguments showed in \Cref{lem:usharp}, using 
now that $u^\flat \in C([0, T]; L^2(\R^n))\cap L^2((0,T); \dot H^1(\R^n))$.
The third limit is a consequence of the dominate convergence theorem and the identity 
\eqref{id:RHS-uflat}.
\end{proof}

\begin{corollary}\label{cor:vsharp-vflat}\sl
Consider $w \in C([0, T] ; L^{2}(\mathbb{R}^{n}))$ such that 
$w (0, \centerdot) = w(T, \centerdot)=0$ 
and $(\partial_{\tm}+\Delta)w $ belongs to $ L^{1}((0, T) ; L^{2}(\mathbb{R}^{n}))$.
If there exists $c > 0$ such that 
$e^{c|\x|} (\partial_{\tm}+\Delta)w $ is also in $ L^{1}((0, T) ; L^{2}(\mathbb{R}^{n}))$, then
\[ \int_{\Sigma}( \partial_{\tm}-\Delta) w \, e^{|\nu|^2 (T - \tm) + \nu \cdot \x} v^\sharp \, = 0 \]
whenever $|\nu| < c$, and
\[ \int_{\Sigma}( \partial_{\tm}-\Delta) w \, e^{|\nu|^2 (T - \tm) + \nu \cdot \x} v^\flat \, = - \int_{\Sigma} w ( \partial_{\tm}+\Delta)(e^{|\nu|^2 (T - \tm) + \nu \cdot \x} v^\flat) \, \]
for all $|\nu| \leq c$.
\end{corollary}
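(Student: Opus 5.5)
The plan is to deduce both identities from \Cref{lem:usharp} and \Cref{lem:uflat} by reversing time. Set $\tilde w(t,x) = w(T-t,x)$, $\tilde v^\sharp(t,x) = v^\sharp(T-t,x)$ and $\tilde v^\flat(t,x) = v^\flat(T-t,x)$.

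\textbf{Transferring the hypotheses.} We have $\tilde w \in C([0,T];L^2(\R^n))$ and $\tilde w(0,\centerdot)=\tilde w(T,\centerdot)=0$. Moreover $(\partial_\tm - \Delta)w(T-t,\centerdot) = -(\partial_\tm+\Delta)\tilde w(t,\centerdot)$. Time reversal preserves the $L^1((0,T);L^2(\R^n))$ norm, even with the weight $e^{c|\x|}$. So $\tilde w$ satisfies the hypotheses of \Cref{lem:usharp,lem:uflat}; here the hypothesis on $w$ should be read as concerning $(\partial_\tm-\Delta)w$, which is the operator appearing in the conclusion.

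\textbf{Transferring the solution.} The function $\tilde u := e^{|\nu|^2 \tm + \nu\cdot\x}(\tilde v^\sharp + \tilde v^\flat)$ equals $v(T-\tm,\centerdot)$. It therefore solves $(\partial_\tm - \Delta - \tilde V)\tilde u = 0$ with $\tilde V(t,x)=V(T-t,x)$, since $(\partial_\tm+\Delta+V)v=0$ becomes $(-\partial_\tm+\Delta+\tilde V)\tilde u=0$. Thus $\tilde u$ is an exponentially growing solution of exactly the type treated in \Cref{lem:usharp,lem:uflat}:
\begin{itemize}
\item $\tilde v^\sharp$ has the form \eqref{id:ushrap}, and $e^{|\nu|^2\tm+\nu\cdot\x}\tilde v^\sharp$ solves the free heat equation;
\item $\tilde v^\flat$ is the correction from \Cref{Prop CGO1} for $\tilde V$;
\item by \Cref{cor:regularity_uflat}, $\tilde v^\flat$ lies in $C([0,T];L^2(\R^n))\cap L^2((0,T);\dot H^1(\R^n))$.
\end{itemize}
These are the only properties of $u^\sharp$ and $u^\flat$ used in the proofs of those lemmas.

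\textbf{The first identity.} Substitute $t\mapsto T-t$:
\[
\int_\Sigma (\partial_\tm-\Delta)w \, e^{|\nu|^2(T-\tm)+\nu\cdot\x} v^\sharp
= -\int_\Sigma (\partial_\tm+\Delta)\tilde w \, e^{|\nu|^2\tm+\nu\cdot\x}\tilde v^\sharp .
\]
The right-hand side vanishes by \Cref{lem:usharp} when $|\nu|<c$.

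\textbf{The second identity.} The same substitution together with \Cref{lem:uflat} gives
\[
-\int_\Sigma(\partial_\tm+\Delta)\tilde w\, e^{|\nu|^2\tm+\nu\cdot\x}\tilde v^\flat = \int_\Sigma \tilde w(\partial_\tm-\Delta)(e^{|\nu|^2\tm+\nu\cdot\x}\tilde v^\flat).
\]
Now reverse time back. The function $e^{|\nu|^2\tm+\nu\cdot\x}\tilde v^\flat$ evaluated at $T-t$ equals $e^{|\nu|^2(T-t)+\nu\cdot x}v^\flat(t,x)$. Under the reversal, $(\partial_\tm-\Delta)$ turns into $-(\partial_\tm+\Delta)$, which yields the stated right-hand side $-\int_\Sigma w(\partial_\tm+\Delta)(e^{|\nu|^2(T-\tm)+\nu\cdot\x}v^\flat)$.

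\textbf{Main obstacle.} The bookkeeping is routine; the only delicate point is making sure the time-reversed objects genuinely match the hypotheses of the earlier lemmas. This means checking the sign conventions of $\partial_\tm$ under reversal. It also means checking that the integrability facts used in \Cref{lem:usharp,lem:uflat} (the bounds from \eqref{in:usharp_full-space}, \Cref{cor:propag_decay}, and the $\dot H^1$ regularity of $\tilde v^\flat$) are invariant under $t\mapsto T-t$. They are, since every norm involved is a time integral or a time supremum.
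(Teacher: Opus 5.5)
Your proposal is correct and takes the same route as the paper: reverse time via $\tilde w(t,x)=w(T-t,x)$ and apply \Cref{lem:usharp,lem:uflat}, and your sign bookkeeping under $t\mapsto T-t$ checks out in both identities. You are also right that the reduction only works if the hypotheses are read as concerning $(\partial_\tm-\Delta)w$ rather than $(\partial_\tm+\Delta)w$; the statement appears to contain a typo here, and your reading matches how the corollary is applied to $w_2$ in \Cref{prop:extension-ortho}.
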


\begin{proof}
It follows from \Cref{lem:usharp,lem:uflat} applied to $\tilde w (t, x) = w (T-t, x) $ for all
$(t, x) \in \overline{\Sigma}$.
\end{proof}


\appendix 

\section{Integration by parts}\label{app:integration-parts}
\begin{lemma}\label{L_A.1}\sl
For every $u, v \in C([0, T] ; L^2(\mathbb{R}^n))$ such that $( \partial_\tm-\Delta) u$ and $(\partial_\tm+\Delta) v$ belong to $L^1((0, T) ; L^2(\mathbb{R}^n))$, we have
\begin{align*}
  \int_{\Sigma}\left[( \partial_{\mathrm{t}}-\Delta) u v + u (\partial_{\mathrm{t}}+\Delta) v \right]= \int_{\mathbb{R}^n}\left[u(T, \centerdot) v(T, \cdot)-u(0, \cdot) v(0, \cdot)\right] .  
\end{align*}
\end{lemma}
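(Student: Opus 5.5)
The plan is to prove the identity by density, regularizing $u$ and $v$ with a mollifier in space and time so that both become smooth and the formula reduces to the fundamental theorem of calculus plus Green's identity in space. Denote $F = (\partial_\tm - \Delta) u$ and $G = (\partial_\tm + \Delta) v$, both in $L^1((0,T); L^2(\R^n))$. It is convenient to represent $u$ and $v$ through Duhamel's formula:
\[ u(t) = e^{t\Delta} u(0) + \int_0^t e^{(t-s)\Delta} F(s)\, \dd s, \qquad v(t) = e^{(T-t)\Delta} v(T) - \int_t^T e^{(s-t)\Delta} G(s)\, \dd s. \]
Both formulas are valid in $C([0,T];L^2(\R^n))$. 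To justify this, note that the difference between $u$ and the Duhamel expression solves the free heat equation with zero initial datum in the distributional sense. Taking the spatial Fourier transform then reduces it to the ODE $\partial_t \hat{w} = -|\xi|^2 \hat w$ for a.e. $\xi$, which has only the zero solution.

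Next, approximate the data. Choose $u_{0,k}, v_{T,k} \in \mathcal{S}(\R^n)$ converging in $L^2$ to $u(0)$ and $v(T)$. Choose $F_k, G_k \in C_c^\infty((0,T)\times\R^n)$ converging to $F$ and $G$ in $L^1((0,T);L^2(\R^n))$; this is possible by density. Define $u_k$ and $v_k$ by the Duhamel formulas with these data. Then $u_k, v_k$ are smooth on $[0,T]\times\R^n$, all their derivatives decay in $x$, and they satisfy $(\partial_\tm-\Delta)u_k = F_k$ and $(\partial_\tm+\Delta)v_k = G_k$ classically.

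For these smooth functions the identity is elementary. Compute
\[ \partial_t \int_{\R^n} u_k v_k \, \dd x = \int_{\R^n} (\partial_t u_k\, v_k + u_k\, \partial_t v_k)\, \dd x, \]
and integrate by parts in space using $\int \Delta u_k\, v_k = \int u_k\, \Delta v_k$. This is legitimate because of the decay at infinity, which follows from Schwartz data and compactly supported sources. Integrating in $t$ over $(0,T)$ gives the identity for $u_k, v_k$.

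Finally, pass to the limit. Estimate \eqref{in:Snu_trivial} with $\nu=0$, together with the contractivity of $e^{t\Delta}$ on $L^2$, gives
\[ \sup_t \| u_k(t) - u(t) \|_{L^2} \le \| u_{0,k} - u(0)\|_{L^2} + \| F_k - F \|_{L^1L^2}, \]
and similarly for $v_k$. The boundary terms therefore converge. The terms $\int_\Sigma F_k v_k$ converge to $\int_\Sigma F v$ because they are pairings of $L^1L^2$ against $L^\infty L^2$, and the same holds for $\int_\Sigma u_k G_k$. The main obstacle is the uniqueness step identifying $u$ and $v$ with their Duhamel representations, since the hypotheses are only distributional. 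The Fourier/ODE argument above handles it: for a.e. $\xi$, $\hat w(\cdot,\xi)$ is absolutely continuous with zero initial value, which forces $\hat w \equiv 0$.
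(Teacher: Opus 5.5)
Your proof is correct, but it takes a different route from the paper's.

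\textbf{The paper's route.} The paper never identifies $u$ and $v$ with Duhamel integrals. It regularizes the given functions directly, setting $w_\varepsilon = \chi(\varepsilon^2 x)\,(\varphi_\varepsilon * \tilde w)$ on $\Sigma_\delta = (\delta, T-\delta)\times\R^n$ with $\varepsilon<\delta$. On $\Sigma_\delta$ the space-time mollifier does not see the time boundary, so it commutes with $\partial_t\pm\Delta$. The slowly varying cut-off makes the commutator terms $O(\varepsilon)$. This gives $(\partial_t\pm\Delta)w_\varepsilon\to(\partial_t\pm\Delta)w$ in $L^1((\delta,T-\delta);L^2(\R^n))$ and $w_\varepsilon(t)\to w(t)$ for each $t$. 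The paper then integrates by parts on $\Sigma_\delta$ and finally lets $\delta\to 0$, using $u,v\in C([0,T];L^2(\R^n))$.

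\textbf{Your route.} You approximate the data $(u(0,\centerdot),F)$ and $(v(T,\centerdot),G)$ rather than the solutions. This gives convergence uniformly on $[0,T]$, so you reach the endpoint terms at $t=0$ and $t=T$ directly, with no $\delta$-limit and no cut-off commutators. The price is the representation step, i.e.\ uniqueness for the Cauchy problem in $C([0,T];L^2(\R^n))$ with source in $L^1((0,T);L^2(\R^n))$. That step exploits the constant-coefficient structure through the Fourier transform, whereas the paper's local regularization needs nothing beyond the distributional equations themselves. The paper does, however, take exactly this uniqueness for granted elsewhere (in the proof of \Cref{lem:propagation_of_decay}), so your argument makes explicit, and proves, a fact the paper already relies on.

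\textbf{Points to tighten when writing it out.} You should also check that the Duhamel expression satisfies the equation in $\mathcal{D}'(\Sigma)$. The ODE step needs careful phrasing, because the zero initial value is only known in $L^2_\xi$, not pointwise in $\xi$. Argue that $\hat w(t,\xi)=c(\xi)e^{-t|\xi|^2}$ for a.e.\ $(t,\xi)$. Then $\|\hat w(t,\centerdot)\|_{L^2}\to 0$ as $t\to 0$ forces $c=0$.
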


This Lemma is a straightforward adaptation of a Proposition 4.2 in \cite{zbMATH07801151}.

\begin{proof}
By applying the dominate convergence theorem we obtain
\begin{align}\label{eq_A2}
 \int_{\Sigma}\left[( \partial_{\mathrm{t}}-\Delta) u v+u {\left( \partial_{\mathrm{t}}+\Delta\right) v}\right]=\lim _{\delta \rightarrow 0} \int_{\Sigma_\delta}\left[( \partial_{\mathrm{t}}-\Delta) u v+u {\left( \partial_{\mathrm{t}}+\Delta\right) v}\right],  
\end{align}
where $\Sigma_\delta=(\delta, T-\delta) \times \mathbb{R}^n$. The idea now is to approximate $u$ and $v$ by $u_{\varepsilon}$ and $v_{\varepsilon}$ respectively, which will be smooth in $\Sigma_\delta$ and compactly supported in space. Let $\chi \in \mathcal{S}\left(\mathbb{R}^n\right)$ be a smooth cut-off such that $0 \leq \chi(x) \leq 1$ for all $x \in \mathbb{R}^n$, supp $\chi \subset\left\{x \in \mathbb{R}^n:|x| \leq 2\right\}$ and $\chi(x)=1$ whenever $|x| \leq 1$. Let $\phi \in \mathcal{S}\left(\mathbb{R}^n\right)$ and $\psi \in \mathcal{S}(\mathbb{R})$ with compact supports and such that supp $\psi \subset\{t \in \mathbb{R}:|t| \leq 1\}, \int_{\mathbb{R}^n} \phi=\int_{\mathbb{R}} \psi=1$ and $\phi(x), \psi(t) \in[0, \infty)$ for all $x \in \mathbb{R}^n$ and $t \in \mathbb{R}$. Whenever $\varepsilon>0$, define $\chi_{\varepsilon}(x)=\chi(\varepsilon^2 x)$ for $x \in \mathbb{R}^n, \phi_{\varepsilon}(x)=\varepsilon^{-n} \phi(x / \varepsilon)$ for $x \in \mathbb{R}^n$, and $\psi_{\varepsilon}(t)=\varepsilon^{-1} \psi(t / \varepsilon)$ for $t \in \mathbb{R}$.

For $w \in C\left([0, T] ; L^2\left(\mathbb{R}^n\right)\right)$ and $\varepsilon<\delta$, we consider
$$
w_{\varepsilon}:(t, x) \in \Sigma_\delta \mapsto \chi_{\varepsilon}(x) \int_{(0, T)} \psi_{\varepsilon}(t-s)\left(\int_{\mathbb{R}^n} \phi_{\varepsilon}(x-y) w(s, y) \mathrm{d} y\right) \mathrm{d} s
$$
We have
\begin{align}\label{eq_A3}
  \lim _{\varepsilon \rightarrow 0}\left\|w(t, \centerdot)-w_{\varepsilon}(t, \centerdot)\right\|_{L^2\left(\mathbb{R}^n\right)}=0, \quad \forall t \in[\delta, T-\delta] .  
\end{align}
Furthermore, if $\left(\partial_{\mathrm{t}}\pm\Delta\right) w \in L^1\left((0, T) ; L^2\left(\mathbb{R}^n\right)\right)$, then
\begin{align}\label{eq_A4}
  \lim _{\varepsilon \rightarrow 0}\left\|\left( \partial_{\mathrm{t}} \pm \Delta\right) w-\left( \partial_{\mathrm{t}} \pm \Delta\right) w_{\varepsilon}\right\|_{L^1\left((\delta, T-\delta) ; L^2\left(\mathbb{R}^n\right)\right)}=0  
\end{align}
Let us give an explanation about \eqref{eq_A4}. For notational convenience, we write $\varphi_{\varepsilon}(t, x)=\psi_{\varepsilon}(t) \phi_{\varepsilon}(x)$ for $(t, x) \in \mathbb{R} \times \mathbb{R}^n$, and let $\tilde{w}$ denote the trivial extension of $w$
$$
\tilde{w}(t, \centerdot)= \begin{cases}w(t, \centerdot) & t \in[0, T], \\ 0 & t \notin[0, T] .\end{cases}
$$
Thus, we have that $w_{\varepsilon}(t, x)=\chi_{\varepsilon}(x)\left(\varphi_{\varepsilon} * \tilde{w}\right)(t, x)$ for all $(t, x) \in \Sigma_\delta$. In order to show that \eqref{eq_A4} holds, let us compute
$$
\left(\partial_{\mathrm{t}} \pm \Delta\right) w_{\varepsilon}=\chi_{\varepsilon}\left( \partial_{\mathrm{t}} \pm \Delta\right)\left(\varphi_{\varepsilon} * \tilde{w}\right) \pm 2 \nabla \chi_{\varepsilon} \cdot \nabla\left(\varphi_{\varepsilon} * \tilde{w}\right) \pm \Delta \chi_{\varepsilon}\left(\varphi_{\varepsilon} * \tilde{w}\right),~~ \text { in } \Sigma_\delta .
$$
Thus, \eqref{eq_A4} follows from
\begin{align}
 \lim _{\varepsilon \rightarrow 0}\left\|\left( \partial_{\mathrm{t}}\pm\Delta\right) w-\chi_{\varepsilon}\left( \partial_{\mathrm{t}}\pm\Delta\right)\left(\varphi_{\varepsilon} \ast \tilde{w}\right)\right\|_{L^1\left((\delta, T-\delta) ; L^2\left(\mathbb{R}^n\right)\right)}=0,  \label{eq_A5} 
\end{align}
and
\begin{align}\label{eq_A6} 
 &\lim _{\varepsilon \rightarrow 0}\Big(\left\|\nabla \chi_{\varepsilon} \cdot \nabla\left(\varphi_{\varepsilon} \ast \tilde{w}\right)\right\|_{L^1\left((\delta, T-\delta) ; L^2\left(\mathbb{R}^n\right)\right)}
 \cr&\qquad\quad+\left\|\Delta \chi_{\varepsilon}\left(\varphi_{\varepsilon} \ast \tilde{w}\right)\right\|_{L^1\left((\delta, T-\delta) ; L^2\left(\mathbb{R}^n\right)\right)}\Big)
 =0.   
\end{align}
Let us start by proving \eqref{eq_A5}. By the triangle inequality
\begin{align}\label{eq_A7}
&\|\left(\partial_{\mathrm{t}}\pm\Delta\right) w-\chi_{\varepsilon}\left( \partial_{\mathrm{t}}\pm\right.  \Delta)\left(\varphi_{\varepsilon} \ast \tilde{w}\right) \|_{L^1\left((\delta, T-\delta) ; L^2\left(\mathbb{R}^n\right)\right)} \cr
&\leq  \left\|\left( \partial_{\mathrm{t}}\pm\Delta\right) w-\varphi_{\varepsilon} \ast\left[\left( \partial_{\mathrm{t}}\pm\Delta\right) \tilde{w}\right]\right\|_{L^1\left((\delta, T-\delta) ; L^2\left(\mathbb{R}^n\right)\right)} \cr
&\qquad+\left\|\left(1-\chi_{\varepsilon}\right)\left( \partial_{\mathrm{t}}\pm\Delta\right) w\right\|_{L^1\left((\delta, T-\delta) ; L^2\left(\mathbb{R}^n\right)\right)}    
\end{align}
To write the first term on the right-hand side as above, we have used $\left\|\chi_{\varepsilon}\right\|_{L^{\infty}\left(\mathbb{R}^n\right)}=1$, and $\left( \partial_{\mathrm{t}}\pm\Delta\right)\left(\varphi_{\varepsilon} \ast \tilde{w}\right)=\varphi_{\varepsilon} \ast\left[\left( \partial_{\mathrm{t}}\pm\Delta\right) \tilde{w}\right]$. It is obvious that
$$
\lim _{\varepsilon \rightarrow 0}\left\|\left(1-\chi_{\varepsilon}\right)\left( \partial_{\mathrm{t}}\pm\Delta\right) w(t, \centerdot)\right\|_{L^2\left(\mathbb{R}^n\right)}=0,~~ \text { for almost every } t \in(\delta, T-\delta) .
$$
Hence, by the dominate convergence theorem,
$$
\lim _{\varepsilon \rightarrow 0}\left\|\left(1-\chi_{\varepsilon}\right)\left( \partial_{\mathrm{t}}\pm\Delta\right) w\right\|_{L^1\left((\delta, T-\delta) ; L^2\left(\mathbb{R}^n\right)\right)}=0 .
$$
To ensure that \eqref{eq_A5} holds, it remains to analyze the first term on the right-hand side of inequality \eqref{eq_A7}. Since $\varepsilon<\delta$, then we get
$$
\varphi_{\varepsilon} \ast\left[\left( \partial_{\mathrm{t}}\pm\Delta\right) \tilde{w}\right](t, x)=\int_{\Sigma} \varphi_{\varepsilon}(t-s, x-y)\left[\left( \partial_{\mathrm{t}}\pm\Delta\right) w\right](s, y)d(s, y) \quad \text {a.e.}(t, x) \in \Sigma_\delta .
$$
Then, by using standard arguments we obtain
$$
\lim _{\varepsilon \rightarrow 0}\left\|\left( \partial_{\mathrm{t}}\pm\Delta\right) w-\varphi_{\varepsilon} \ast\left[\left( \partial_{\mathrm{t}}\pm\Delta\right) \tilde{w}\right]\right\|_{L^1\left((\delta, T-\delta) ; L^2\left(\mathbb{R}^n\right)\right)}=0 .
$$
Consequently, \eqref{eq_A5} holds. We move now to prove \eqref{eq_A6}. We begin by analyzing the first term in \eqref{eq_A6}. Applying Young’s convolution inequality and using the fact that $ \nabla\left(\varphi_{\varepsilon} \ast \tilde{w}\right)(t,x)= \big(\psi_{\varepsilon} \ast \left( \nabla \phi_{\varepsilon} \ast \tilde{w}\right)(\cdot, x)\big) (t)$ we obtain 
\begin{align*}
& \left\|\nabla \chi_{\varepsilon} \cdot \nabla\left(\varphi_{\varepsilon} \ast \tilde{w}\right)\right\|_{L^1\left((\delta, T-\delta) ; L^2\left(\mathbb{R}^n\right)\right)} \\
 &\qquad \leq \|  \nabla \chi_{\varepsilon} \|_{L^\infty\left(\mathbb{R}^n\right)} \|  \nabla\left(\varphi_{\varepsilon} \ast \tilde{w}\right)\|_{L^1\left((\delta, T-\delta) ; L^2\left(\mathbb{R}^n\right)\right)}\\
 &\qquad\leq \|  \nabla \chi_{\varepsilon} \|_{L^\infty\left(\mathbb{R}^n\right)} \|  \psi_{\varepsilon} \|_{L^1\left(\mathbb{R}\right)} \|  \nabla \phi_{\varepsilon} \|_{L^1\left(\mathbb{R}^n\right)} \|  \tilde{w} \|_{L^1\left((\delta, T-\delta) ; L^2\left(\mathbb{R}^n\right)\right)}\\
  &\qquad \leq (T-2 \delta) \|  \nabla \chi_{\varepsilon} \|_{L^\infty\left(\mathbb{R}^n\right)} \|  \psi_{\varepsilon} \|_{L^1\left(\mathbb{R}\right)} \|  \nabla \phi_{\varepsilon} \|_{L^1\left(\mathbb{R}^n\right)} \|  \tilde{w} \|_{L^\infty\left((0, T) ; L^2\left(\mathbb{R}^n\right)\right)}.
\end{align*}
Since $\nabla \chi_{\varepsilon}(x)={\varepsilon}^2 \nabla \chi(\varepsilon^2x)$, $\|  \nabla \phi_{\varepsilon} \|_{L^1\left(\mathbb{R}^n\right)}={\varepsilon}^{-1} \|  \nabla \phi\|_{L^1\left(\mathbb{R}^n\right)}$ and $\|  \psi_{\varepsilon} \|_{L^1\left(\mathbb{R}\right)}=1$ we deduce
\begin{align*}
& \left\|\nabla \chi_{\varepsilon} \cdot \nabla\left(\varphi_{\varepsilon} \ast \tilde{w}\right)\right\|_{L^1\left((\delta, T-\delta) ; L^2\left(\mathbb{R}^n\right)\right)} \\
 &\qquad \leq (T-2 \delta) \varepsilon \|  \nabla \chi \|_{L^\infty\left(\mathbb{R}^n\right)}  \|  \nabla \phi \|_{L^1\left(\mathbb{R}^n\right)} \|  \tilde{w} \|_{L^\infty\left((0, T) ; L^2\left(\mathbb{R}^n\right)\right)}.
\end{align*}
Proceeding similarly for the second term in \eqref{eq_A6}, we derive
\begin{align*}
& \left\|\Delta \chi_{\varepsilon} \left(\varphi_{\varepsilon} \ast \tilde{w}\right)\right\|_{L^1\left((\delta, T-\delta) ; L^2\left(\mathbb{R}^n\right)\right)} \\
  &\qquad \leq (T-2 \delta) \|  \Delta \chi_{\varepsilon} \|_{L^\infty\left(\mathbb{R}^n\right)} \|  \psi_{\varepsilon} \|_{L^1\left(\mathbb{R}\right)} \| \phi_{\varepsilon} \|_{L^1\left(\mathbb{R}^n\right)} \|  \tilde{w} \|_{L^\infty\left((0, T) ; L^2\left(\mathbb{R}^n\right)\right)}\\
 &\qquad \leq (T-2 \delta) {\varepsilon}^4\|  \Delta \chi \|_{L^\infty\left(\mathbb{R}^n\right)} \|  \tilde{w} \|_{L^\infty\left((0, T) ; L^2\left(\mathbb{R}^n\right)\right)}.
\end{align*}
Taking the limit, as $\varepsilon $ tends to $ 0$, in both inequalities yields \eqref{eq_A6}. Consequently, the desired result \eqref{eq_A4} follows. We now apply these results for the corresponding functions $u_{\varepsilon}$ and $v_{\varepsilon}$. A direct application of the standard integration-by-parts rules gives
\[ \int_{\Sigma_\delta}[( \partial_{\mathrm{t}}-\Delta) u_{\varepsilon}{v_{\varepsilon}}+u_{\varepsilon} ( \partial_{\mathrm{t}}+\Delta) v_{\varepsilon}]= \int_{\mathbb{R}^n}[u_{\varepsilon}(T-\delta,\centerdot) {v_{\varepsilon}(T-\delta, \centerdot)}+u_{\varepsilon}(\delta, \centerdot) {v_{\varepsilon}(\delta, \centerdot)}] . \]
After using \eqref{eq_A3} and \eqref{eq_A4} we can conclude, by the dominate convergence theorem, that
$$
\int_{\Sigma_\delta}\left[\left( \partial_{\mathrm{t}}-\Delta\right) u v+u {\left( \partial_{\mathrm{t}}+\Delta\right) v}\right]= \int_{\mathbb{R}^n}\left[u(T-\delta, \centerdot) {v\left(T-\delta{, \centerdot}\right)}+u(\delta, \centerdot) {v(\delta, \centerdot)}\right] .
$$
The limits \eqref{eq_A2} and
$$
\int_{\mathbb{R}^n}[u(T, \centerdot) {v(T, \centerdot)}-u(0, \centerdot) {v(0, \centerdot)}]=\lim _{\delta \rightarrow 0} \int_{\mathbb{R}^n}[u(T-\delta, \centerdot) {v(T-\delta, \centerdot)}+u(\delta, \centerdot) {v(\delta, \centerdot})].
$$
yield our desired result.
\end{proof}

\sloppy
\begin{acknowledgements}
E.A. and P.C. were supported by the grant PID2024-156267NB-I00 (funded by 
MICIU/AEI/10.13039/501100011033 and FEDER, UE), by BCAM-BERC 2026-2029
(funded by the Basque Government) and Severo Ochoa CEX2021-001142-S.
E.A. is also supported by TrafoSPINN funded by IKUR HPC-IA (Eusko Jaurlaritza)
N.A. was supported by a mobility scholarship funded by the University of Tunis El Manar during the initial stages of this work. Co-funded by the European Union (ERC, SAMPDE, 101041040). Views and opinions expressed are however those of the authors only and do not necessarily reflect those of the European Union or the European Research Council. Neither the European Union nor the granting authority can be held responsible for them. P.C. is also funded by Ikerbasque, the Basque Foundation for Science.
\end{acknowledgements}

\bibliography{references}{}

\begin{thebibliography}{10}

\bibitem{zbMATH07335388}
Mourad Bellassoued and Oumaima Ben~Fraj.
\newblock Stably determining time-dependent convection-diffusion coefficients
  from a partial {Dirichlet}-to-{Neumann} map.
\newblock {\em Inverse Probl.}, 37(4):35, 2021.
\newblock Id/No 045011.

\bibitem{zbMATH07173395}
Mourad Bellassoued and Imen Rassas.
\newblock Stability estimate for an inverse problem of the convection-diffusion
  equation.
\newblock {\em J. Inverse Ill-Posed Probl.}, 28(1):71--92, 2020.

\bibitem{arXiv:2607.13778}
Liam Buisson.
\newblock Recovery of coefficients for a convection-diffusion equation from
  partial data.
\newblock Preprint, {arXiv}:2607.13778 [math.{AP}] (2026), 2026.

\bibitem{zbMATH08122191}
Manuel Ca{\~n}izares, Pedro Caro, Ioannis Parissis, and Thanasis Zacharopoulos.
\newblock The initial-to-final-state inverse problem with time-independent
  potentials.
\newblock {\em Inverse Probl.}, 41(11):19, 2025.
\newblock Id/No 115001.

\bibitem{arXiv:2602.12122}
Manuel Ca{\~n}izares, Pedro Caro, Ioannis Parissis, and Thanasis Zacharopoulos.
\newblock The initial-to-final-state inverse problem with critically-singular
  potentials.
\newblock Preprint, {arXiv}:2602.12122 [math.{AP}] (2026), 2026.

\bibitem{zbMATH01578850}
B.~Canuto and O.~Kavian.
\newblock Determining coefficients in a class of heat equations via boundary
  measurements.
\newblock {\em SIAM J. Math. Anal.}, 32(5):963--986, 2001.

\bibitem{arXiv:1812.08495}
Pedro Caro and Yavar Kian.
\newblock Determination of convection terms and quasi-linearities appearing in
  diffusion equations.
\newblock Preprint, {arXiv}:1812.08495 [math.{AP}] (2018), 2018.

\bibitem{zbMATH07801151}
Pedro Caro and Alberto Ruiz.
\newblock An inverse problem for data-driven prediction in quantum mechanics.
\newblock {\em J. Math. Phys.}, 65(1):28, 2024.
\newblock Id/No 011505.

\bibitem{arXiv:2512.04796}
Pedro Caro and Alberto Ruiz.
\newblock The initial-to-final-state inverse problem with unbounded potentials
  and {Strichartz} estimates.
\newblock Preprint, {arXiv}:2512.04796 [math.{AP}] (2025), 2025.

\bibitem{zbMATH06869653}
Mourad Choulli and Yavar Kian.
\newblock Logarithmic stability in determining the time-dependent zero order
  coefficient in a parabolic equation from a partial {Dirichlet}-to-{Neumann}
  map. application to the determination of a nonlinear term.
\newblock {\em J. Math. Pures Appl. (9)}, 114:235--261, 2018.

\bibitem{zbMATH07802400}
Ali Feizmohammadi.
\newblock An inverse boundary value problem for isotropic nonautonomous heat
  flows.
\newblock {\em Math. Ann.}, 388(2):1569--1607, 2024.

\bibitem{zbMATH07543698}
Yavar Kian.
\newblock Simultaneous determination of different class of parameters for a
  diffusion equation from a single measurement.
\newblock {\em Inverse Probl.}, 38(7):29, 2022.
\newblock Id/No 075008.

\bibitem{zbMATH08109740}
Rohit~Kumar Mishra, Anamika Purohit, and Manmohan Vashisth.
\newblock Inverse problem for a time-dependent convection-diffusion equation in
  admissible geometries.
\newblock {\em Res. Math. Sci.}, 12(4):30, 2025.
\newblock Id/No 75.

\bibitem{zbMATH08020896}
Anamika Purohit.
\newblock Determining time-dependent convection and density terms in a
  convection-diffusion equation using partial data.
\newblock {\em Commun. Anal. Comput.}, 3:69--90, 2025.

\bibitem{zbMATH07829439}
Imen Rassas.
\newblock H{\"o}lder stability estimates in determining the time-dependent
  coefficients of the heat equation from the {Cauchy} data set.
\newblock {\em J. Inverse Ill-Posed Probl.}, 32(2):183--198, 2024.

\bibitem{zbMATH07170240}
Suman~Kumar Sahoo and Manmohan Vashisth.
\newblock A partial data inverse problem for the convection-diffusion equation.
\newblock {\em Inverse Probl. Imaging}, 14(1):53--75, 2020.

\bibitem{zbMATH07597186}
Soumen Senapati and Manmohan Vashisth.
\newblock Stability estimate for a partial data inverse problem for the
  convection-diffusion equation.
\newblock {\em Evol. Equ. Control Theory}, 11(5):1681--1699, 2022.

\end{thebibliography}
\bibliographystyle{plain}

\end{document}